\documentclass[a4paper,11pt]{amsart}
\usepackage{latexsym}
\usepackage{amsmath,amssymb}
\usepackage{amsmath}

\usepackage[all]{xy}
\usepackage{enumerate}
\usepackage[usenames]{color}
\usepackage{xcolor}

\topmargin=-10mm \evensidemargin=0in \oddsidemargin=0in

\newtheorem{theo}{Theorem}[section]
\newtheorem{coll}[theo]{Corollary}
\newtheorem{lemm}[theo]{Lemma}
\newtheorem{prop}[theo]{Proposition}
\newtheorem{defn}[theo]{Definition}
\newtheorem{ex}[theo]{Example}
\newtheorem{rem}[theo]{Remark}

\newcommand{\Hom}{{\rm Hom}}
\newcommand{\End}{{\rm End}}

\newcommand{\A}{\mathcal A}

\newcommand{\C}{\mathcal C}


\begin{document}
\sloppy

\title[Strongly CS-Rickart objects]{Strongly CS-Rickart and dual strongly CS-Rickart objects in abelian categories}

\author[S. Crivei]{Septimiu Crivei}

\address{Faculty of Mathematics and Computer Science, Babe\c s-Bolyai University, Str. M. Kog\u alniceanu 1, 400084 Cluj-Napoca, Romania} \email{crivei@math.ubbcluj.ro}

\author[S.M. Radu]{Simona Maria Radu}

\address{Faculty of Mathematics and Computer Science, Babe\c s-Bolyai University, Str. M. Kog\u alniceanu 1, 400084 Cluj-Napoca, Romania} \email{simonamariar@math.ubbcluj.ro}

\subjclass[2010]{18E10, 18E15, 16D90}

\keywords{Abelian category, (dual) CS-Rickart object, (dual) strongly CS-Rickart object, (dual) Rickart object, (dual) strongly Rickart object, (strongly) extending object, (strongly) lifting object.}

\begin{abstract} We introduce (dual) strongly relative CS-Rickart objects in abelian categories, as common generalizations of (dual) strongly relative Rickart objects and strongly extending (lifting) objects. We give general properties, and we study direct summands, (co)products of (dual) strongly relative CS-Rickart objects and classes all of whose objects are (dual) strongly self-CS-Rickart. 
\end{abstract}

\date{January 11, 2021}

\maketitle

\section{Introduction}

(Dual) CS-Rickart objects in abelian categories have been introduced by the authors in \cite{CR1} as a common generalization of (dual) Rickart objects and extending (lifting) objects. In modules categories, they have been introduced and studied by Abyzov, Nhan and Quynh \cite{AN1,ANQ} and Tribak \cite{Tribak}. Rickart objects and their duals in abelian categories have been introduced and studied by Crivei, K\"or and Olteanu \cite{CK,CO}, and subsume previous work of D\u asc\u alescu, N\u ast\u asescu, Tudorache and D\u au\c s \cite{DNTD} on regular objects in abelian categories, Lee, Rizvi and Roman \cite{LRR10,LRR11} on Rickart and dual Rickart modules, and in particular, Rizvi and Roman \cite{RR04,RR09} and Keskin T\"ut\"unc\"u and Tribak \cite{KT} on Baer and dual Baer modules. The study of Baer modules, Rickart modules and their duals have the origin in the work of von Neumann on regular rings \cite{vN}, Kaplansky \cite{K} on Baer rings and Maeda \cite{Maeda} on Rickart rings. On the other hand, extending and lifting modules have been important concepts in module theory for the last decades (e.g., see the monographs \cite{CLVW,DHSW}). The reader is referred to our paper \cite{CR1} for further background on CS-Rickart related notions.

The concepts of (dual) Rickart objects and extending (lifting) objects in abelian categories may be specialized to those of (dual) strongly Rickart objects and strongly extending (lifting) objects. In this direction, we mention the work of Crivei and Olteanu on (dual) strongly Rickart objects \cite{CO1,CO2}, Al-Saadi and Ibrahiem on (dual) strongly Rickart modules \cite{AI14,AI15}, Ebrahimi Atani, Khoramdel and Dolati Pish Hesari \cite{EKD} on strongly extending modules or Wang \cite{Wang} on strongly lifting modules. All these notions are obtained from the corresponding general notions by referring to fully invariant direct summands instead of direct summands. 

As (dual) CS-Rickart objects  generalize (dual) Rickart and exending (lifting) objects, it is natural to consider the concept of (dual) strongly CS-Rickart object, as a common generalization of (dual) strongly Rickart objects and strongly extending (lifting) objects in abelian categories. This is the topic of the present paper. As in our previous related work, we take advantage of the setting of abelian categories in order to extensively apply the duality principle and automatically obtain dual properties. Of course, the proofs of the results will only be given for one of the dual notions.    

We briefly present our main results. In Section 2 we introduce and give the first properties of (dual) strongly relative CS-Rickart objects in abelian categories. We show how 
our concepts relate to those of (dual) relative CS-Rickart objects and (dual) strongly relative Rickart objects. For instance, an object $M$ of an abelian category $\mathcal{A}$ is (dual) strongly self-CS-Rickart if and only if $M$ is (dual) self-CS-Rickart and weak duo if and only if $M$ is (dual) self-CS-Rickart and ${\rm End}_{\mathcal{A}}(M)$ is abelian. We also present examples supporting our theory. 
In Section 3 we show that the class of (dual) relative CS-Rickart objects is closed under direct summands. We also prove that every strongly self-CS-Rickart object satisfies a stricter form of the SIP-extending (SIP-lifting) property on direct summands. 
In Section 4 we study (co)products of (dual) strongly relative CS-Rickart objects. In this direction, we show that if $M$ and $N_1,\dots,N_n$ are objects of an abelian category $\mathcal{A}$, then $\bigoplus_{i=1}^n N_i$ is (dual) strongly $M$-CS-Rickart if and only if $N_i$ is (dual) strongly $M$-CS-Rickart for every $i\in \{1,\dots,n\}$. Also, for a (finite) direct sum decomposition $M=\bigoplus_{i\in I}M_i$ in an abelian category $\mathcal{A}$, we prove that $M$ is (dual) strongly self-CS-Rickart if and only if $M_i$ is (dual) strongly self-CS-Rickart for each $i\in I$ and $\Hom_{\mathcal{A}}(M_i,M_j)=0$ for every $i,j\in I$ with $i\neq j$. We end with the structure of some (dual) strongly self-CS-Rickart modules over a Dedekind domain. Finally, in Section 5 we study classes all of whose objects are (dual) strongly self-CS-Rickart. Among other results, for an abelian category $\A$ with enough injectives (projectives), and a class $\mathcal{C}$ of objects of $\A$ which is closed under binary direct sums and contains all injective (projective) objects of $\A$, we prove that every object of $\C$ is strongly extending (lifting) if and only if every object of $\C$ is (dual) strongly self-CS-Rickart.

\section{(Dual) relative strongly CS-Rickart objects}

For every morphism $f:M\to N$ in an abelian category $\mathcal{A}$ we denote by
${\rm ker}(f):{\rm Ker}(f)\to M$, ${\rm coker}(f):N\to {\rm Coker}(f)$ and ${\rm im}(f):{\rm Im}(f)\to N$ the kernel, the cokernel and the image of $f$ respectively. 
For a short exact sequence $0\to A\to B\to C\to 0$ in $\A$, we sometimes write $C=B/A$. A morphism $f:A\to B$ is called a \emph{section} (\emph{retraction}) if there is a morphism
$f':B\to A$ such that $f'f=1_A$ ($ff'=1_B$). We also recall the following notions, which are natural generalization from modules \cite{CO1,Wis}.

\begin{defn}\rm Let $\mathcal{A}$ be an abelian category. 
\begin{enumerate}
\item \rm A monomorphism $f:K \to M$ in $\mathcal{A}$ is called: 
\begin{enumerate}[(i)]
\item \textit{essential} if for every morphism $h:M \to P$ in $\mathcal{A}$ such that $hf$ is a monomorphism, $h$ is a monomorphism. 
\item \emph{fully invariant} if for every morphism $h:M\to M$,
there exists a morphism $\alpha:K\to K$ such that $hf=f\alpha$.
\end{enumerate}
\item \rm An epimorphism $g:M \to N$ in $\mathcal{A}$ is called: 
\begin{enumerate}[(i)]
\item \textit{superfluous} if for every morphism $h:Q \to M$ in $\mathcal{A}$ such that $gh$ is an epimorphism, $h$ is an epimorphism. 
\item \emph{fully coinvariant} if for every morphism $h:M\to M$,
there exists a morphism $\beta:N\to N$ such that $gh=\beta g$.
\end{enumerate}   
\end{enumerate}
\end{defn}

We recall the concepts of (dual) relative CS-Rickart objects, which generalize both (dual) relative Rickart objects and extending (lifting) objects in abelian categories.

\begin{defn}[{\cite[Definitions~2.3, 2.4]{CR1}}] \rm Let $M$ and $N$ be objects of an abelian category $\mathcal{A}$. Then $N$ is called:
\begin{enumerate}
\item 
\begin{enumerate}[(i)]
\item \rm {\textit{$M$-CS-Rickart}} if for every morphism $f:M\to N$ there are an 
essential monomorphism $e:{\rm Ker}(f) \to L$ and a section $s:L \to M$ in $\mathcal{A}$ such that ${\rm ker}(f)=se$. Equivalently, $N$ is $M$-CS-Rickart if and only if for every morphism $f:M\to N$, ${\rm Ker}(f)$ is essential in a direct summand of $M$.
\item \rm {\textit{self-CS-Rickart}} if $N$ is $N$-CS-Rickart.
\item \emph{extending} if every subobject of $M$ is essential in a direct summand of $M$.
\end{enumerate}
\item 
\begin{enumerate}[(i)]
\item \rm{\textit{dual $M$-CS-Rickart}} if for every morphism $f:M\to N$ there are a    
retraction $r:N \to P$ and a superfluous epimorphism $t:P\to{\rm Coker}(f)$ in $\mathcal{A}$ such that ${\rm coker}(f)=tr$.
Equivalently, $N$ is dual $M$-CS-Rickart if and only if for every morphism $f:M\to N$, 
${\rm Im}(f)$ lies above a direct summand of $N$, in the sense that ${\rm Im}(f)$ contains a direct summand $K$ of $M$ such that ${\rm Im}(f)/K$ is superfluous in $M/K$. 
\item \rm  {\textit{dual self-CS-Rickart}} if $N$ is dual $N$-CS-Rickart.
\item \emph{lifting} if every subobject $L$ of $M$ lies above a direct summand of $M$, 
in the sense that $L$ contains a direct summand $K$ of $M$ such that $L/K$ is superfluous in $M/K$. 
\end{enumerate}
\end{enumerate}
\end{defn}

Now we have the following specializations of the above notions by restricting to fully invariant sections and fully coinvariant retractions. Such notions have already been considered in module categories \cite{AN1,ANQ,EKD,Wang}.

\begin{defn} \rm Let $M$ and $N$ be objects of an abelian category $\mathcal{A}$. Then $N$ is called:
\begin{enumerate}
\item 
\begin{enumerate}[(i)]
\item \rm {\textit{strongly $M$-CS-Rickart}} if for every morphism $f:M\to N$ there are an essential monomorphism $e:{\rm Ker}(f) \to L$ and a fully invariant section $s:L \to M$ in $\mathcal{A}$ such that ${\rm ker}(f)=se$. Equivalently, $N$ is strongly $M$-CS-Rickart if and only if for every morphism $f:M\to N$, ${\rm Ker}(f)$ is essential in a fully invariant direct summand of $M$.
\item \rm {\textit{self-CS-Rickart}} if $N$ is strongly $N$-CS-Rickart.
\item \emph{strongly extending} if every subobject of $M$ is essential in a fully invariant direct summand of $M$.
\end{enumerate}
\item 
\begin{enumerate}[(i)]
\item \rm{\textit{dual $M$-CS-Rickart}} if for every morphism $f:M\to N$ there are a fully coinvariant  
retraction $r:N \to P$ and a superfluous epimorphism $t:P\to{\rm Coker}(f)$ in $\mathcal{A}$ such that ${\rm coker}(f)=tr$. Equivalently, $N$ is dual $M$-CS-Rickart if and only if for every morphism $f:M\to N$, ${\rm Im}(f)$ lies above a fully invariant direct summand of $N$.
\item \rm  {\textit{dual self-CS-Rickart}} if $N$ is dual $N$-CS-Rickart.
\item \emph{strongly lifting} if every subobject $L$ of $M$ lies above a fully invariant direct summand of $M$, in the sense that $L$ contains a fully invariant direct summand $K$ of $M$ such that $L/K$ is superfluous in $M/K$. 
\end{enumerate}
\end{enumerate}
\end{defn}

\begin{rem} \rm (1) Every (dual) strongly self-Rickart object and every strongly extending (lifting) object of an abelian category $\A$ is (dual) strongly self-CS-Rickart. 

(2) Let $M$ and $N$ be objects of an abelian category. If $M$ is uniform (i.e., every non-zero subobject of $M$ is essential), then $N$ is strongly $M$-CS-Rickart. If $N$ is hollow (i.e., every proper subobject of $N$ is superfluous), then $N$ is dual strongly $M$-CS-Rickart.
\end{rem}

Recall that an object $M$ of an abelian category $\mathcal{A}$ is called \emph{weak duo} if every section $K\to M$ is fully invariant, or equivalently, every retraction $M\to C$ is fully coinvariant \cite[Definition~2.6]{CO1}.

\begin{prop}\label{st0}
Let $M$ and $N$ be two objects of an abelian category $\mathcal{A}$.
    \begin{enumerate}
       \item Assume that every direct summand of $M$ is 
               isomorphic to a subobject of $N$. Then $N$ is strongly   
               $M$-CS-Rickart if and only if $N$ is $M$-CS-Rickart and  
               $M$ is weak duo.
       \item Assume that every direct summand of $N$ is   
                isomorphic to a factor object of $M$. Then $N$ is   
                dual strongly $M$-CS-Rickart if and only if $N$ is  
                dual  $M$-CS-Rickart and $N$ is weak duo.
   \end{enumerate}
\end{prop}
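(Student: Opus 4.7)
The plan is to prove statement (1); statement (2) will then follow by the duality principle in abelian categories, as announced in the introduction. The reverse implication of (1) is essentially immediate: given $f:M\to N$, the $M$-CS-Rickart hypothesis supplies an essential monomorphism $e:\mathrm{Ker}(f)\to L$ and a section $s:L\to M$ with $\mathrm{ker}(f)=se$, and the weak duo property of $M$ upgrades $s$ to a fully invariant section, whence $N$ is strongly $M$-CS-Rickart.

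For the forward implication, I would note first that strongly $M$-CS-Rickart trivially implies $M$-CS-Rickart, so the task reduces to verifying that $M$ is weak duo. Let $s_0:K\to M$ be an arbitrary section with retraction $p:M\to K$, and let $s_1:K'\to M$ realise a complementary direct summand of $M$, with projection $p':M\to K'$, so that $s_0p+s_1p'=1_M$. The hypothesis on direct summands produces a monomorphism $j:K'\to N$, and I set $f=jp':M\to N$. Since $j$ is monic, $\mathrm{Ker}(f)=\mathrm{Ker}(p')$, which is precisely the subobject $s_0:K\to M$. Strong $M$-CS-Rickartness then provides a fully invariant section $t:L\to M$ and an essential monomorphism $e:K\to L$ with $s_0=te$.

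The key step is to argue that $e$ is an isomorphism. Once this is known, full invariance of $t$ descends to $s_0$: for any $h:M\to M$, picking $\alpha:L\to L$ with $ht=t\alpha$ and setting $\beta=e^{-1}\alpha e:K\to K$ gives $hs_0=s_0\beta$. To see $e$ is an isomorphism, I would observe that $K$ is a direct summand of $M$ via $s_0$, and simultaneously $K\leq L\leq M$ in the subobject lattice with $L$ a direct summand of $M$ via $t$. The modular law in the subobject lattice of $M$ then forces $K$ to be a direct summand of $L$, and combining this with the essentiality of $K$ in $L$ yields $K=L$.

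The main technical point, and in my view the only genuine obstacle, is this modular-law step: one must carefully identify the various representatives of $K$ (as a direct summand of $M$ and as a subobject of $L$) and check that essentiality of $K$ in $L$ excludes any non-trivial direct complement of $K$ inside $L$. Everything else is routine manipulation of sections, retractions and kernels. Statement (2) is then obtained by running the dual argument in $\mathcal{A}^{\mathrm{op}}$, which interchanges sections with retractions, kernels with cokernels, and essential monomorphisms with superfluous epimorphisms.
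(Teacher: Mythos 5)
Your proposal is correct and follows essentially the same route as the paper: the converse is the same one-line upgrade of the section via the weak duo hypothesis, and for the forward direction the paper likewise tests a summand $X$ by embedding its complement $X'$ into $N$ and applying strong CS-Rickartness to the resulting composite, concluding $X=U$ from the fact that a direct summand of $M$ contained in $U$ is a direct summand of $U$ (your modular-law step, which the paper leaves implicit).
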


\begin{proof} (1) Assume that $N$ is strongly $M$-CS-Rickart. Then it is clearly $M$-CS-Rickart. In order to prove that $M$ is weak duo, we consider a direct summand $X$ of $M$ and write $M=X\oplus X'$. Then we have the composite morphism
$$\SelectTips{cm}{} 
\xymatrix{
  M\ar[r] ^{\pi_{X'}}&X'\ar@{=}[r]^{\psi}&Y\ar[r]^{i}& N
   }
,$$ 
where $\pi_{X'}:M\to X'$ is the canonical projection, $\psi$ is an isomorphism, $Y$ is a subobject of $N$ and $i$ is a monomorphism. Then ${\rm Ker}(i\psi \pi_{X'})={\rm Ker}(\pi_{X'})=X$ is essential in a fully invariant direct summand $U$ of $M$, because $N$ is strongly $M$-CS-Rickart. But $X$ is also a direct summand of $U$, which implies that $X=U$. Thus, $X$ is fully invariant in $M$.

Conversely, assume that $N$ is $M$-CS-Rickart and $M$ is weak duo. Let $f:M\to N$ be a morphism in $\mathcal{A}$. Then there  exists a direct summand $U$ of $M$ such that ${\rm Ker}(f)$ is essential in $U$. By the fact that $M$ is weak duo, we deduce that $U$ is a fully invariant direct summand of $M$, and therefore $N$ is strongly $M$-CS-Rickart.
\end{proof}

\begin{coll}\label{st00}
Let $M$ be an object of an abelian category $\mathcal{A}$. Then:
         \begin{enumerate}
                   \item $M$ is strongly self-CS-Rickart if and only if $M$ is    
                            self-CS-Rickart and weak duo.
                   \item $M$ is dual strongly self-CS-Rickart if and only if  
                            $M$ is dual self-CS-Rickart and weak duo.
          \end{enumerate}
\end{coll}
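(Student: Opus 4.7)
The plan is to derive this corollary as a direct specialization of Proposition~\ref{st0} to the case $N=M$. Both equivalences already appear inside that proposition, so what I really need to check is that the mild hypotheses attached to the proposition become vacuous once $N$ and $M$ coincide.

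For~(1), I would invoke Proposition~\ref{st0}(1) with $N=M$. Its hypothesis asks that every direct summand of $M$ be isomorphic to a subobject of $N$; with $N=M$, any direct summand $K$ of $M$ is already a subobject of $M$ via its canonical section $K\to M$, so this holds automatically. The conclusion of the proposition then reads exactly that $M$ is strongly self-CS-Rickart if and only if $M$ is self-CS-Rickart and weak duo.

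For~(2), I would similarly apply Proposition~\ref{st0}(2) with $N=M$. The hypothesis now is that every direct summand of $N$ be isomorphic to a factor object of $M$; writing $M=K\oplus K'$ for a direct summand $K$, the projection along $K'$ exhibits an isomorphism $K\cong M/K'$, so the hypothesis is again trivially satisfied. Alternatively, (2) may be recovered from (1) by invoking the duality principle in the abelian-categorical setting, in line with the authors' general strategy. The only substantive observation is that the hypotheses of Proposition~\ref{st0} have been calibrated precisely so as to collapse in the self case, so no genuine obstacle arises beyond recording this specialization.
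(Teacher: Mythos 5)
Your proposal is correct and matches the paper's intent exactly: Corollary~\ref{st00} is stated without a separate proof precisely because it is the specialization $N=M$ of Proposition~\ref{st0}, whose hypotheses become vacuous in that case, just as you verify. Your check that a direct summand of $M$ is a subobject of $M$ and is isomorphic to a factor object of $M$ via the complementary projection is exactly the observation needed.
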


\begin{coll}\label{st01}
Let $M$ be an indecomposable object of an abelian category $\mathcal{A}$.Then:
           \begin{enumerate}
                    \item $M$ is strongly self-CS-Rickart if and only if $M$  
                             is self-CS-Rickart.
                    \item $M$ is dual strongly self-CS-Rickart if and only if 
                             $M$ is dual self-CS-Rickart.
\end{enumerate}
\end{coll}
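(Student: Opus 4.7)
The plan is to derive this corollary as an immediate consequence of Corollary~\ref{st00}, which says that $M$ is (dual) strongly self-CS-Rickart if and only if $M$ is (dual) self-CS-Rickart and weak duo. Since the forward implications in (1) and (2) are obvious, the whole content of the statement reduces to showing that any indecomposable object is automatically weak duo.

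To that end, I would argue directly that every section $s:L\to M$ into an indecomposable object $M$ is fully invariant. Indeed, such a section exhibits $L$ as (isomorphic to) a direct summand of $M$, and by indecomposability the only direct summands of $M$ are $0$ and $M$ itself. If $L\cong 0$ then $s$ is the zero morphism, and for every $h:M\to M$ the identity $hs=s\alpha$ holds trivially with $\alpha=1_{0}$ (or any morphism $0\to 0$). If instead $s$ is an isomorphism, then for every $h:M\to M$ one sets $\alpha=s^{-1}hs:L\to L$, and the relation $hs=s\alpha$ is immediate. Thus every section into $M$ is fully invariant, i.e., $M$ is weak duo.

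Combining this with Corollary~\ref{st00}(1) gives part (1): if $M$ is self-CS-Rickart, then $M$ is self-CS-Rickart and weak duo, hence strongly self-CS-Rickart. Part (2) follows by the dual argument, using the equivalent characterization of weak duo in terms of retractions (namely, every retraction $M\to C$ is fully coinvariant, which again is vacuously satisfied when the only retractions are $M\to 0$ and isomorphisms $M\to M$) together with Corollary~\ref{st00}(2).

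I do not expect any real obstacle here: the only point worth emphasizing is that the definition of weak duo has no content on indecomposable objects, so the corollary is essentially a restatement of Corollary~\ref{st00} in that special case.
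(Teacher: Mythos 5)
Your proof is correct and follows exactly the route the paper intends: Corollary~\ref{st01} is stated without proof as an immediate consequence of Corollary~\ref{st00}, the only observation needed being that an indecomposable object is automatically weak duo because its only sections (up to isomorphism) are $0\to M$ and isomorphisms, both trivially fully invariant. Your verification of that observation is accurate.
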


Recall that a ring $R$ is called \emph{abelian} if every idempotent element of $R$ is central. An element $a$ of a ring $R$ is called \emph{left (right) semicentral} if $ba=aba$ ($ab=aba$) for every $b\in R$. A ring $R$ is abelian if and only if every idempotent element of $R$ is left semicentral if and only if every idempotent element of $R$ is right semicentral (e.g., see \cite[p.~17]{Wei}). 

\begin{prop}\label{st1}
Let $M$ be an object of an abelian category $\mathcal{A}$. Then:
         \begin{enumerate}
                  \item $M$ is strongly self-CS-Rickart if and only if $M$ is 
                           self-CS-Rickart and ${\rm End}_{\mathcal{A}}(M)$ is  
                           abelian.
                   \item $M$ is dual strongly self-CS-Rickart if and only if   
                            $M$ is dual self-CS-Rickart and 
                            ${\rm End}_{\mathcal{A}}(M)$ is abelian.
         \end{enumerate}
\end{prop}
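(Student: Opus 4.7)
The plan is to apply Corollary \ref{st00} and reduce both (1) and (2) to a single auxiliary equivalence: for any object $M$ of $\mathcal{A}$, the object $M$ is weak duo if and only if $\End_{\A}(M)$ is abelian. Once this is established, part (1) is immediate from Corollary \ref{st00}(1), and part (2) follows by the dual version of weak duo (which, by the definition recalled before Proposition \ref{st0}, coincides with the original one) together with Corollary \ref{st00}(2). Since the recalled characterization of abelian rings in terms of left (or right) semicentral idempotents has been stated just before the proposition, the whole task is to translate the weak duo condition into the semicentrality of all idempotents of $R:=\End_{\A}(M)$.

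To set up the translation, I would first recall the standard bijection between direct summands of $M$ and idempotents of $R$: a section $s:L\to M$ with chosen retraction $r:M\to L$ (so $rs=1_L$) produces the idempotent $e:=sr\in R$, and conversely every idempotent of $R$ arises this way. I would then reformulate the fully invariant property of $s$. Given $h\in R$, the existence of $\alpha:L\to L$ with $hs=s\alpha$ amounts to $hs$ factoring through $s$, and, using the splitting determined by $e$, this is in turn equivalent to $ehs=hs$. Composing on the right by $r$ yields $ehe=he$, so that $e$ is left semicentral. Conversely, from $he=ehe$ one recovers $ehs=hs$ by composing on the right with $s$ and using $es=srs=s$, and since $rs=1_L$ the map $\alpha=rhs$ satisfies $s\alpha=hs$.

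Combining these two directions shows that $s$ is fully invariant if and only if the corresponding idempotent $e$ is left semicentral. Running through all sections (equivalently, all idempotents), $M$ is weak duo iff every idempotent of $R$ is left semicentral iff $R$ is abelian. Plugging this into Corollary \ref{st00}(1) gives (1). For (2), the dual argument replaces the section/retraction pair $(s,r)$ with a retraction/section pair giving the same idempotent $e=sr$, translates fully coinvariant into $eh=ehe$ (right semicentrality), and concludes by the symmetric characterization of abelian rings together with Corollary \ref{st00}(2).

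The only delicate step is the clean identification ``$hs$ factors through $s$ iff $ehs=hs$ iff $he=ehe$'' carried out entirely via categorical identities, with no appeal to elements; but the only facts needed are $rs=1_L$, $e=sr$, and the derived identity $es=s$, so I do not expect serious obstacles. Everything else is bookkeeping and an invocation of duality for part (2).
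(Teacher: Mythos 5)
Your proposal is correct and follows essentially the same route as the paper: both arguments reduce the statement to the equivalence between $M$ being weak duo and ${\rm End}_{\mathcal{A}}(M)$ being abelian, via the translation between fully invariant sections $s$ (with $e=sr$) and left semicentral idempotents $e$ — a translation the paper imports as \cite[Lemma~2.13]{CO1} and you verify inline, correctly. The only cosmetic difference is that the paper's forward implication argues directly with $\mathrm{Ker}(1-e)$ and the strongly CS-Rickart hypothesis instead of invoking Corollary~\ref{st00}, but that is the same computation as in Proposition~\ref{st0}, so nothing of substance changes.
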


\begin{proof}
            Assume first that $M$ is strongly self-CS-Rickart. Then $M$ is self-CS-Rickart. In order to show that the ring ${\rm End}_{\mathcal{A}}(M)$ is abelian, we prove that every idempotent element $e\in {\rm End}_{\mathcal{A}}(M)$ is left semicentral. Since every idempotent splits, there exist an object $K$ and morphisms $k:K\to M$ and $p:M\to K$ such that $e=kp$ and $pk=1_K$.
Since $M$ is strongly self-CS-Rickart, ${\rm Ker}(1-e)$ is essential in a fully invariant direct summand $L$ of $M$. Thus ${\rm Ker}(1-e)$ is a direct summand of $L$, as a direct summand of $M$. We deduce that ${\rm Im}(k)={\rm Im}(e)={\rm Ker}(1-e)=L$, and thus $k$ is fully  invariant. By \cite[Lemma~2.13]{CO1}, $e$ is left semicentral, and so ${\rm End}_{\mathcal{A}}(M)$ is abelian.

Conversely, assume that $M$ is self-CS-Rickart and ${\rm End}_{\mathcal{A}}(M)$ is abelian. In order to show that $M$ is strongly self-CS-Rickart, by Proposition \ref{st0} it is enough to prove that $M$ is weak duo. To this end, let $X$ be a direct summand of $M$. Then there exist a section $\sigma_1 :X\to M$ and a retraction $\pi_1 :M\to X$ such that $\pi_1\sigma_1=1_X$ and $\sigma_1\pi_1:M\to M$ is an idempotent. Since ${\rm End}_{\mathcal{A}}(M)$ is abelian, we deduce that $\sigma_1\pi_1$ is left semicentral. Thus, $\sigma_1$ is fully invariant by \cite[Lemma~2.13]{CO1}.
\end{proof}

\begin{ex} \label{ex1} \rm By Proposition \ref{st1} and \cite[Example~2.6]{CR1}, we immediately have the following examples in the corresponding module categories:

(i) The $\mathbb{Z}$-module $\mathbb{Z}_4=\mathbb{Z}/4\mathbb{Z}$ is both strongly self-CS-Rickart and dual strongly self-CS-Rickart, but neither strongly self-Rickart, nor dual strongly self-Rickart (also use \cite[Proposition~2.14]{CO1}).

(ii) The $\mathbb{Z}$-module $\mathbb{Z}_4$ is both strongly $\mathbb{Z}$-CS-Rickart and dual strongly $\mathbb{Z}$-CS-Rickart, but it is neither strongly $\mathbb{Z}$-Rickart, nor dual strongly $\mathbb{Z}$-Rickart.

(iii) The $\mathbb{Z}$-module $\mathbb{Z}$ is strongly self-CS-Rickart, but not dual strongly self-CS-Rickart. 
 
(iv) The $\mathbb{Z}$-module $\mathbb{Q}$ is dual strongly self-CS-Rickart, but not strongly lifting \cite[Example~2.14]{Tribak}.   

(v) The $\mathbb{Z}$-module $\mathbb{Z}\oplus \mathbb{Z}_p$ (for some prime $p$) is not strongly self-CS-Rickart, although it is self-CS-Rickart. Indeed, it is not weak duo, because for the morphism $f:\mathbb{Z}\oplus \mathbb{Z}_p\to \mathbb{Z}\oplus \mathbb{Z}_p$ defined by $f(m,n)=(0,m)$, we have $f(\mathbb{Z}\oplus 0)=0\oplus \mathbb{Z}_p$, which is not a submodule of $\mathbb{Z}\oplus 0$. Hence $\mathbb{Z}\oplus 0$ is a direct summand of $\mathbb{Z}\oplus \mathbb{Z}_p$, which is not fully invariant.
                
(vi) For relatively prime integers $m, n$, the $\mathbb{Z}$-module  $\mathbb{Z}_n$ is strongly $\mathbb{Z}_m$-CS-Rickart.

(vii) Let $K$ be a field and consider the ring $R=\begin{pmatrix}K&K[X]\\0&K[X] \end{pmatrix}$. Then $R$ is a self-Rickart right $R$-module, and thus it is a self-CS-Rickart right $R$-module. But ${\rm End}_R(R)\cong R$ is not abelian, hence 
$R$ is not a strongly self-CS-Rickart right $R$-module. It is neither a dual (strongly) self-CS-Rickart right $R$-module.
\end{ex}

\begin{ex} \rm Let $A$ be a ring, and let $G$ be a subgroup of the group ${\rm Aut}(A)$ of ring automorphisms of $A$. For $a\in A$ and $g\in G$, denote by $a^g$ the image of $a$ under $g$. Denote by $A^G$ the fixed ring of $A$ under $G$, i.e., $A^G=\{a\in A\mid a^g=a \quad \mbox{for every $g\in G$}\}$. The skew group ring is given by $A\ast G=\bigoplus_{g\in G}Ag$ with addition given componentwise and multiplication defined by $(ag)(bh)=ab^{g^{-1}}gh\in Agh$ for every $a, b\in A$
and $g, h\in G$. If $G=\{g_1, g_2,\ldots, g_n\}$, $a\in A$ and
$\beta=a_1g_1+a_2g_2+\ldots +a_ng_n\in A\ast G$ with $a_i\in A$, define $$a\cdot \beta=a^{g_1}a_1^{g_1}+a^{g_2}a_2^{g_2}+\ldots +a^{g_n}a_n^{g_n}.$$ Then $A$ is a right $A\ast G$-module.

Following \cite[Example 3.13]{LRR10}, consider the ring $A=\begin{pmatrix} \mathbb{Z}_2 & \mathbb{Z}_2 \\
0 & \mathbb{Z}_2 \end{pmatrix}$. If $g\in {\rm Aut}(A)$ is the conjugation by $\begin{pmatrix} 1 & 1 \\ 0 & 1 \end{pmatrix}$, it follows that $G=\{1,g\}$ is a subgroup of ${\rm Aut}(A)$. Consider the skew group ring $R=A\ast G$ and the right $R$-module $M=A$. Then we have:
\begin{align*}
\End_R(M)&=A^G=\left\{\begin{pmatrix} a&b \\ 0&a  \end{pmatrix}\Big | a,b\in \mathbb{Z}_2\right\}\\ &=\left\{\varphi_1=\begin{pmatrix}
0 & 0 \\
0 & 0 \\
\end{pmatrix}, \varphi_2=\begin{pmatrix}
0 & 1 \\
0 & 0 \\
\end{pmatrix}, \varphi_3=\begin{pmatrix}
1 & 0 \\
0 & 1 \\
\end{pmatrix}, \varphi_4=\begin{pmatrix}
1 & 1 \\
0 & 1 \\
\end{pmatrix}\right\}.
\end{align*}
It follows that: $${\rm Ker} (\varphi_1)=M, \quad {\rm Ker} (\varphi_2)=\begin{pmatrix}
\mathbb{Z}_2 & \mathbb{Z}_2 \\
0 & 0 \\
\end{pmatrix}, \quad {\rm Ker} (\varphi_3)=0, \quad {\rm Ker} (\varphi_4)=\begin{pmatrix}
0 & \mathbb{Z}_2 \\
0 & 0 \\
\end{pmatrix}$$ \cite[Example~2.3]{CK18}. Now it is easy to see that the right $R$-module $M$ is self-CS-Rickart, and its endomorphism ring is  abelian. Hence the right $R$-module $M$ is strongly self-CS-Rickart by Proposition \ref{st01}. But it is not strongly self-Rickart \cite[Example 3.13]{LRR10}.
\end{ex}

Now recall the following concepts, which will be useful to relate (dual) strongly relative Rickart and (dual) strongly relative CS-Rickart objects.

\begin{defn}[{\cite[Definition~9.4]{CK}}] \rm Let $M$ and $N$ be objects of an abelian category $\mathcal{A}$. Then:
\begin{enumerate}
\item $N$ is called \emph{$M$-$\mathcal{K}$-nonsingular} if for any morphism $f:M\to N$ in $\mathcal{A}$, 
${\rm Ker}(f)$ essential in $M$ implies $f=0$. 
\item $M$ is called \emph{$N$-$\mathcal{T}$-nonsingular} if for any morphism $f:M\to N$ in $\mathcal{A}$, 
${\rm Im}(f)$ superfluous in $N$ implies $f=0$. 
\end{enumerate}
\end{defn}

\begin{theo} \label{t:nonsing} Let $M$ and $N$ be objects of an abelian category $\mathcal{A}$. Then:
\begin{enumerate}
\item $N$ is strongly $M$-CS-Rickart and $N$ is $M$-$\mathcal{K}$-nonsingular if and only if $N$ is strongly $M$-Rickart.
\item $N$ is dual strongly $M$-CS-Rickart and $M$ is $N$-$\mathcal{T}$-nonsingular if and only if $N$ is dual strongly $M$-Rickart.
\end{enumerate}
\end{theo}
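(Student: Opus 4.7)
The plan is to prove part (1) directly and deduce part (2) via the duality principle in the abelian category. The whole argument rests on the elementary observation that an essential subobject which is also a direct summand must equal the ambient object (since its direct complement has zero intersection with it, and essentiality then forces the complement to vanish).

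For the backward implication of (1), assume $N$ is strongly $M$-Rickart. Every direct summand is essential in itself, so $N$ is immediately strongly $M$-CS-Rickart. To see $M$-$\mathcal{K}$-nonsingularity, take $f:M\to N$ with ${\rm Ker}(f)$ essential in $M$; the hypothesis gives that ${\rm Ker}(f)$ is already a direct summand of $M$, so by the observation above ${\rm Ker}(f)=M$ and $f=0$.

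For the forward implication, given $f:M\to N$, the strongly $M$-CS-Rickart hypothesis supplies a fully invariant section $s:L\to M$, with retraction $p:M\to L$ from a decomposition $M=L\oplus L'$, such that ${\rm Ker}(f)$ is essential in $L$ (and in particular ${\rm Ker}(f)\subseteq L$). I would introduce the composite $g=fsp:M\to N$. A direct kernel computation shows ${\rm Ker}(g)={\rm Ker}(f)\oplus L'$, and using the standard fact that essentiality is preserved under direct sum with a complementary summand, this is essential in $L\oplus L'=M$. The $M$-$\mathcal{K}$-nonsingularity hypothesis then forces $g=0$; since $p$ is an epimorphism, $fs=0$, so $L\subseteq{\rm Ker}(f)$. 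Combined with the reverse containment, ${\rm Ker}(f)=L$ is a fully invariant direct summand of $M$, so $N$ is strongly $M$-Rickart. Part (2) follows by dualization: kernels are replaced by cokernels, essential monomorphisms by superfluous epimorphisms, sections by retractions, fully invariant by fully coinvariant, and $\mathcal{K}$-nonsingularity by $\mathcal{T}$-nonsingularity.

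The main (modest) obstacle is verifying the kernel computation ${\rm Ker}(fsp)={\rm Ker}(f)\oplus L'$ and the essentiality-preservation step in purely abelian-categorical terms (via pullbacks along the idempotent $sp:M\to M$ and the subobject lattice) rather than through set-theoretic elements; both are routine, though the latter requires a small case analysis on whether a nonzero subobject $X\subseteq M$ meets $L'$ trivially or not.
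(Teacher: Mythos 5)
Your proof is correct and follows essentially the same strategy as the paper's: use the nonsingularity hypothesis to force $f$ to vanish on the fully invariant direct summand $L$ in which ${\rm Ker}(f)$ is essential, conclude ${\rm Ker}(f)=L$, and dualize for part (2). The only difference is a technical one in the forward direction: the paper restricts $f$ to the summand (considering $fs\colon L\to N$) and invokes the fact that $M$-$\mathcal{K}$-nonsingularity is inherited by direct summands of $M$, whereas you precompose with the idempotent $sp$ and apply $M$-$\mathcal{K}$-nonsingularity directly, at the cost of verifying that ${\rm Ker}(fsp)={\rm Ker}(f)\oplus L'$ is essential in $L\oplus L'=M$; both auxiliary facts are standard and valid in abelian categories, so either route works.
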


\begin{proof} (1) Assume that $N$ is strongly $M$-CS-Rickart and $N$ is $M$-$\mathcal{K}$-nonsingular. Let $f:M \to N$ be a morphism in $\mathcal{A}$. Since $N$ is strongly $M$-CS-Rickart, there exists a fully invariant direct summand $U$ of $M$ such that ${\rm Ker}(f)$ is essential in $U$. Let $\sigma:U\to M$ be the inclusion monomorphism, which is fully  invariant, and consider the morphism $f\sigma:U\to N$. Since ${\rm Ker}(f\sigma)={\rm Ker}(f)$ is essential in $U$ and 
$N$ is $U$-$\mathcal{K}$-nonsingular \cite[Lemma~2.10]{CR1}, we have $f\sigma=0$. 
It follows that ${\rm Ker}(f)=U$. Hence ${\rm Ker}(f)$ is a fully invariant direct summand of $M$, which shows that $N$ is strongly $M$-Rickart.

Conversely, assume that $N$ is strongly $M$-Rickart. Then $N$ is strongly $M$-CS-Rickart. Now let $f:M\to N$ be a morphism in $\mathcal{A}$ such that ${\rm Ker}(f)$ is essential in $M$. But since $N$ is strongly $M$-Rickart, ${\rm Ker}(f)$ is a fully invariant direct summand of $M$. Hence ${\rm Ker}(f)=M$, and thus $f=0$.
Therefore, $N$ is $M$-$\mathcal{K}$-nonsingular. 
\end{proof}

Let $M$ and $N$ be objects of an abelian category $\mathcal{A}$. Following \cite{CK} and \cite[p.~220]{NZ},  $M$ is called \emph{direct $N$-injective} if every subobject of $N$ isomorphic to a direct summand of $M$ is a direct summand of $N$. Dually, $N$ is called \emph{direct $M$-projective} if for every factor object $M/K$ isomorphic to a direct summand of $N$, $K$ is a direct summand of $M$. For $M=N$ we obtain the concepts of \emph{direct injective} object (or \emph{$C_2$-object}) and \emph{direct projective} object (or \emph{$D_2$-object}) respectively.
Also, recall that $N$ is called \emph{(strongly) $M$-regular} if $N$ is (strongly) $M$-Rickart and dual (strongly) $M$-Rickart \cite{CK,CO2,DNTD}. Now we may characterize when a (strongly) $M$-CS-Rickart or dual (strongly) $M$-CS-Rickart object is (strongly) $M$-regular in terms of the above non-singularity conditions.

\begin{coll} Let $M$ and $N$ be objects of an abelian category $\mathcal{A}$. 
\begin{enumerate}
\item Assume that $N$ is (strongly) $M$-CS-Rickart. Then   $N$ is (strongly) $M$-regular if and only if $N$ is $M$-$\mathcal{K}$-nonsingular and $M$ is direct $N$-injective.
\item Assume that $N$ is dual (strongly) $M$-CS-Rickart. Then $N$ is (strongly) $M$-regular if and only if 
$M$ is $N$-$\mathcal{T}$-nonsingular and $N$ is direct $M$-projective.
\end{enumerate}
\end{coll}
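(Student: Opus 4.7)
The plan is to split the equivalence into two self-contained pieces. The first piece is already supplied by Theorem \ref{t:nonsing}(1) (and its non-strong counterpart from \cite{CR1}): assuming $N$ is (strongly) $M$-CS-Rickart, being $M$-$\mathcal{K}$-nonsingular is equivalent to being (strongly) $M$-Rickart. Once this trade is made, the task reduces to proving, under the standing hypothesis that $N$ is (strongly) $M$-Rickart, that $N$ is dual (strongly) $M$-Rickart if and only if $M$ is direct $N$-injective.

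For the ``only if'' direction of the reduced equivalence, assume $N$ is dual (strongly) $M$-Rickart, and let $X\le N$ be isomorphic via some $\psi$ to a direct summand $X'$ of $M$. Writing $M=X'\oplus M''$, I would form the composite
\[
f := \iota_X \circ \psi \circ \pi_{X'} : M \longrightarrow N,
\]
where $\pi_{X'}$ is the canonical projection and $\iota_X$ is the inclusion; since $\mathrm{Im}(f)=X$, the dual Rickart hypothesis immediately places $X$ as a direct summand of $N$, so $M$ is direct $N$-injective. For the ``if'' direction, let $f:M\to N$ be arbitrary. The (strongly) $M$-Rickart property splits $M=\mathrm{Ker}(f)\oplus U$ with $U$ a (fully invariant) direct summand of $M$; the restriction $f|_U$ is a monomorphism whose image is $\mathrm{Im}(f)$, so $\mathrm{Im}(f)\cong U$, a direct summand of $M$, and direct $N$-injectivity delivers $\mathrm{Im}(f)$ as a direct summand of $N$. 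This gives that $N$ is dual $M$-Rickart.

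The principal obstacle is the fully invariance refinement in the strongly case: one still has to verify that the direct summand $\mathrm{Im}(f)$ of $N$ is actually \emph{fully invariant}. My plan is to take an arbitrary $h\in\End_{\mathcal{A}}(N)$, fix the decomposition $N=\mathrm{Im}(f)\oplus V$ produced above, and study the composite $g:=\pi_V\circ h\circ f : M\to N$. Since $N$ is strongly $M$-Rickart, $\mathrm{Ker}(g)$ must be a fully invariant direct summand of $M$; using that $\mathrm{Ker}(g)\supseteq\mathrm{Ker}(f)$ together with the fact that $U$ is fully invariant in $M$, the goal is to force $\mathrm{Ker}(g)=M$, equivalently $g=0$, which is precisely $h(\mathrm{Im}(f))\subseteq\mathrm{Im}(f)$.

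Part (2) then follows from part (1) by the duality principle in abelian categories, since all the notions in play — dual (strongly) CS-Rickart, dual (strongly) Rickart, $\mathcal{T}$-nonsingularity, and direct projectivity — are the term-by-term duals of those appearing in part (1), so no separate argument is needed.
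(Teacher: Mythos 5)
Your reduction via Theorem \ref{t:nonsing} (and its non-strong counterpart \cite[Theorem~2.11]{CR1}) is exactly the paper's first step, and your direct argument for the non-strong half of what remains --- that an $M$-Rickart object $N$ is dual $M$-Rickart if and only if $M$ is direct $N$-injective --- is correct; it is essentially the content of \cite[Theorem~5.3]{CK}, which the paper simply quotes. The problem is the strongly case, where the paper quotes \cite[Theorem~2.10]{CO2} and you instead try to show that under ``$N$ strongly $M$-Rickart and $M$ direct $N$-injective'' every $\mathrm{Im}(f)$ is a \emph{fully invariant} direct summand of $N$. This is the step you yourself flag as the principal obstacle, and it cannot be closed by your scheme of forcing $\mathrm{Ker}(\iota_V\pi_V hf)=M$: the two hypotheses only constrain morphisms out of $M$ and subobjects of $N$ isomorphic to summands of $M$, and say nothing about endomorphisms $h$ of $N$. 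Concretely, let $\mathcal{A}$ be the category of $k$-vector spaces, $M=k$ and $N=k\oplus k$. Then $N$ is strongly $M$-Rickart (every kernel of a map $k\to k\oplus k$ is $0$ or $k$, both fully invariant summands of $k$) and $M$ is direct $N$-injective (every line in $k\oplus k$ is a summand), yet the image of the first-coordinate inclusion is $k\oplus 0$, which is not fully invariant in $k\oplus k$ since the coordinate swap moves it. So the implication you reduced to is false as stated, and $g$ need not vanish. Two further slips feed into this: the complement $U$ of $\mathrm{Ker}(f)$ need \emph{not} be fully invariant in the strong case (only $\mathrm{Ker}(f)$ is; this corresponds to a left semicentral, not central, idempotent), and $\psi$ should run from $X'$ to $X$ while $g$ should be $\iota_V\pi_V hf$ to land in $N$.

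The upshot is that the strongly half of the statement cannot be recovered from the bare hypotheses you retain; the citation of \cite[Theorem~2.10]{CO2} has to be invoked with whatever precise formulation and side hypotheses that theorem carries (compare the assumptions ``every direct summand of $N$ is isomorphic to a factor object of $M$'' in Proposition \ref{st0}, which do exclude the example above), and your self-contained replacement does not supply that content. Indeed, the same example satisfies all the right-hand conditions of part (1) in its strong reading while failing to be strongly $M$-regular in the sense recalled in this paper, so the strong version genuinely depends on how \cite[Theorem~2.10]{CO2} is stated rather than on the argument you propose. Your appeal to duality for part (2) is fine and matches the paper's practice, but it inherits the same gap.
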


\begin{proof} This follows by \cite[Theorem~5.3]{CK}, \cite[Theorem~2.10]{CO2},  \cite[Theorem~2.11]{CR1} and Theorem \ref{t:nonsing}.
\end{proof}

\section{Direct summands of (dual) relative strongly CS-Rickart objects}

We begin with a result showing that the (dual) strongly relative CS-Rickart property is preserved by direct summands.

\begin{theo} \label{t:sdr1} Let $r:M\to M'$ be an epimorphism and $s:N'\to N$ a monomorphism in an abelian category $\mathcal{A}$. 
\begin{enumerate} 
\item If $r$ is a retraction and $N$ is strongly $M$-CS-Rickart, then $N'$ is strongly $M'$-CS-Rickart.
\item If $s$ is a section and $N$ is dual strongly $M$-CS-Rickart, then $N'$ is dual strongly $M'$-CS-Rickart.
\end{enumerate}
\end{theo}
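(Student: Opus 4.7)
By the duality principle, it suffices to prove part~(1). The strategy is to lift a given morphism $f':M'\to N'$ to a morphism $f:M\to N$, apply the hypothesis to $f$ to obtain a fully invariant direct summand of $M$ in which ${\rm Ker}(f)$ is essential, and then descend along the retraction~$r$ to obtain the required fully invariant direct summand of $M'$ in which ${\rm Ker}(f')$ is essential.

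First, I would fix a section $r':M'\to M$ with $rr'=1_{M'}$, so that $M = r'(M')\oplus K$ with $K={\rm Ker}(r)$, and set $f = sf'r:M\to N$. Since $s$ is a monomorphism, ${\rm Ker}(f) = {\rm Ker}(f'r) = r'({\rm Ker}(f'))\oplus K$ under the above decomposition. By hypothesis, ${\rm Ker}(f)$ is essential in a fully invariant direct summand $U$ of $M$. Because $U$ is fully invariant, the idempotent endomorphism $\pi = r'r$ of $M$ preserves $U$, giving $U = (U\cap r'(M'))\oplus(U\cap K) = U_1 \oplus U_2$. The containment $K\subseteq {\rm Ker}(f)\subseteq U$ forces $U_2 = K$, hence $U = U_1 \oplus K$. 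Therefore $U_1$ is a direct summand of $r'(M')$, and since the restriction of $r$ to $r'(M')$ is an isomorphism onto $M'$, the subobject $U' := r(U) = r(U_1)$ is a direct summand of $M'$. A standard argument shows that essentiality of $r'({\rm Ker}(f'))\oplus K$ in $U_1\oplus K$ implies essentiality of $r'({\rm Ker}(f'))$ in $U_1$, and transferring this via the isomorphism $r|_{r'(M')}$ yields ${\rm Ker}(f')$ essential in $U'$.

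It remains to verify that $U'$ is fully invariant in $M'$. For an arbitrary endomorphism $h':M'\to M'$, I would consider the lifted endomorphism $h = r'h'r$ of $M$. Full invariance of $U$ gives $h(U)\subseteq U$, and combining this with the identity $rh = h'r$ yields $h'(U') = h'(r(U)) = r(h(U))\subseteq r(U) = U'$, as required. The main obstacle I anticipate is precisely this transfer of full invariance: the property of $U$ being fully invariant in $M$ does not, on its face, pass to $M'$, and the key trick is to encode every endomorphism of $M'$ as an endomorphism of $M$ via the retraction--section pair $r$ and $r'$ in order to access the hypothesis. Once this encoding is in place, the remaining verifications are routine bookkeeping for the decomposition $M = r'(M')\oplus K$.
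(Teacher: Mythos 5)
Your argument is correct, and it reaches the paper's conclusion by a noticeably more self-contained route. The paper sets $f=sf'r$ as you do and obtains the fully invariant summand $U\supseteq{\rm Ker}(sf'r)$, but then it delegates the construction of the summand of $M'$ to the diagram built in the proof of the non-strong version (Theorem~2.6 of the CS-Rickart paper), identifies the square $U\,M\,M'\,V$ as a pushout, and transfers full invariance by quoting the dual of a proposition on fully coinvariant retractions from another paper: ${\rm coker}(v)r={\rm coker}(u)$ forces ${\rm coker}(v)$ to be fully coinvariant, hence $v$ fully invariant. You instead decompose $U$ internally along the idempotent $\pi=r'r$ (which preserves $U$ precisely because $U$ is fully invariant), observe $K={\rm Ker}(r)\subseteq{\rm Ker}(f)\subseteq U$ to get $U=U_1\oplus K$, and verify full invariance of $U'=r(U_1)$ directly via the lifting $h'\mapsto r'h'r$ of endomorphisms. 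The two constructions produce the same subobject of $M'$, namely $U/{\rm Ker}(r)$; what your version buys is independence from the external citation, at the cost of a few routine verifications you leave implicit. Two of these deserve a line each: that $U_1=U\cap r'(M')$ is a direct summand of $r'(M')$ follows from the modular law applied to $U_1\subseteq r'(M')$ and $M=U_1\oplus(K\oplus W)$; and the whole argument, being phrased with elements and internal sums, should be read as a computation in the modular lattice of subobjects (or after a Freyd--Mitchell embedding), which is standard but worth acknowledging since the ambient category is an arbitrary abelian one.
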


\begin{proof}
(1) Let $f:M'\to N'$ be a morphism in $\mathcal{A}$. Consider the morphism $sfr: M\to N$, and denote $L={\rm Ker}(fr)={\rm Ker}(sfr)$ and $l={\rm ker}(fr):L\to M$. Since $N$ is strongly $M$-CS-Rickart, 
there exist an essential monomorphism 
$e:L\to U$ and a fully invariant section $u:U\to M$ such that $l=ue$. Then by the proof of \cite[Theorem~2.6]{CR1} we have the following induced commutative diagram:
$$\SelectTips{cm}{}
\xymatrix{
 & 0 \ar[d] & 0 \ar[d] & 0\ar[d] & \\
 & X \ar@{=}[r] \ar[d]_i & X \ar@{=}[r] \ar[d]^{j} & X \ar[d]^{uj} \\ 
0 \ar[r] & L \ar[r]^-{e} \ar@{-->}[d]_p & U \ar[r]^-{u} \ar@{-->}[d]^-{q} & M \ar[r]^-{fr} \ar[d]^r & N' \ar@{=}[d] \ar[r]^s & N \ar@{=}[d] \\
0 \ar[r] & K \ar@{-->}[r]_-{k} \ar[d] & V \ar@{-->}[r]_-{v} \ar[d] & M' \ar[r]_f \ar[d] & N' \ar[r]_s & N \\
 & 0 & 0 & 0 &
}$$
with exact columns, where $k:K\to V$ is an essential monomorphism, $v:V\to M$ is a section and ${\rm ker}(f)=vk$. By \cite[Lemma~2.9]{CK} the square $UMM'V$ is a pushout, whence ${\rm coker}(v)r={\rm coker}(u)$. Since ${\rm coker}(u)$ is a fully coinvariant retraction, so is ${\rm coker}(v)$ by the dual of \cite[Proposition~2.8]{CKT}. Hence $v$ is a fully invariant section. Thus, $N'$ is strongly $M'$-CS-Rickart.
\end{proof}

\begin{coll} \label{c:sdr5} Let $M$ and $N$ be objects of an abelian category $\mathcal{A}$, 
$M'$ a direct summand of $M$ and $N'$ a direct summand of $N$. 
\begin{enumerate} 
\item If $N$ is strongly $M$-CS-Rickart, then $N'$ is strongly $M'$-CS-Rickart.
\item If $N$ is dual strongly $M$-CS-Rickart, then $N'$ is dual strongly $M'$-CS-Rickart.
\end{enumerate}
\end{coll}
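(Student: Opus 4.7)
The plan is to observe that this corollary is an immediate specialization of Theorem \ref{t:sdr1} to the canonical maps arising from direct summand decompositions, so no new work beyond verifying the hypotheses is required.

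More precisely, to prove (1), I would write $M = M' \oplus M''$ and $N = N' \oplus N''$. The canonical projection $r : M \to M'$ is then a retraction (hence an epimorphism), and the canonical inclusion $s : N' \to N$ is a section (hence in particular a monomorphism). These are exactly the data required as input to Theorem \ref{t:sdr1}, so part (1) of that theorem applies directly: since $r$ is a retraction and $N$ is strongly $M$-CS-Rickart by hypothesis, we conclude that $N'$ is strongly $M'$-CS-Rickart.

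For (2) the argument is the dual: the inclusion $s : N' \to N$ is a section, the projection $r : M \to M'$ is an epimorphism, and so Theorem \ref{t:sdr1}(2) yields that $N'$ is dual strongly $M'$-CS-Rickart from the hypothesis that $N$ is dual strongly $M$-CS-Rickart.

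There is no real obstacle; the only point worth a brief sanity check is that a direct summand decomposition genuinely supplies the hypotheses of Theorem \ref{t:sdr1} (namely an epimorphism that is a retraction on the $M$-side for (1), and a monomorphism that is a section on the $N$-side for (2)), but both are immediate in any additive category. In particular, one does not need to revisit the pushout diagram from the proof of Theorem \ref{t:sdr1}; the corollary is essentially a change-of-notation statement.
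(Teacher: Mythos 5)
Your proposal is correct and matches the paper's intent exactly: the corollary is stated without proof precisely because it follows by applying Theorem \ref{t:sdr1} to the canonical projection $M\to M'$ (a retraction, hence an epimorphism) and the canonical inclusion $N'\to N$ (a section, hence a monomorphism). Nothing further is needed.
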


Recall that an object $M$ of an abelian category $\mathcal{A}$ has \emph{SIP} (\emph{SSIP}) if for any family of (two) subobjects of $M$, their intersection is a direct summand, and it has \emph{SSP} (\emph{SSSP}) if for any family of (two) subobjects of $M$, their sum is a direct summand (e.g., see \cite{CK}). Also, $M$ is called \emph{SSIP-extending} (\emph{SIP-extending}) if for any family of (two) subobjects of $M$ which are essential in direct summands of $M$, their intersection is essential in a direct summand of $M$, and \emph{SSSP-lifting} (\emph{SSP-lifting}) if for any family of (two) subobjects of $M$ which lie above direct summands of $M$, their sum lies above a direct summand of $M$ \cite{CR1} (also see  \cite{AI14,AI15,ANQ,Karab} for modules). Next we consider some strict versions of SSIP-extending and SIP-extending objects, and their duals, SSSP-lifting and SSP-lifting objects in abelian categories. 

\begin{defn} \rm An object $M$ of an abelian category $\mathcal{A}$ is called: 
\begin{enumerate}
\item \emph{strictly SSIP-extending} (\emph{strictly SIP-extending}) if for any family of (two) subobjects of $M$ 
which are essential in direct summands of $M$, their intersection is essential in a fully invariant direct summand of $M$.
\item \emph{strictly SSSP-lifting} (\emph{strictly SSP-lifting}) if for any family of (two) subobjects of $M$ 
which lie above direct summands of $M$, their sum lies above a fully invariant direct summand of $M$.
\end{enumerate}
\end{defn}

The following lemma will be useful, and its proof is similar to that of \cite[Lemma~3.4]{CR1}.

\begin{lemm} \label{l:SIPSSP} Let $M$ be an object of an abelian category $\A$. Then:
\begin{enumerate}
\item $M$ is strictly SIP-extending if and only if the intersection of any two direct summands of $M$ is essential in a fully invariant direct summand of $M$. 
\item $M$ is strictly SSP-lifting if and only if the sum of any two direct summands of $M$ lies above a fully invariant direct summand of $M$.
\end{enumerate}
\end{lemm}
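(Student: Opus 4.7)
The plan is to prove (1) only; part (2) is the categorical dual and follows by the duality principle that the paper consistently invokes. Each equivalence has the shape ``a strict extension/lifting property for any pair of subobjects essentially contained in, or lying above, direct summands is equivalent to the same property on pairs of direct summands themselves,'' so only the sufficiency direction needs real work. Necessity is immediate: every direct summand $D$ of $M$ is essential in itself (respectively, lies above itself as the whole thing), so the hypothesis of each definition is automatically met by a pair of direct summands.

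For the sufficiency half of (1), the key reduction is to push an arbitrary pair $(K_1,K_2)$ with $K_i$ essential in a direct summand $D_i$ of $M$ onto the pair $(D_1,D_2)$, for which the hypothesis already gives the desired conclusion. I would do this through the chain $K_1\cap K_2 \leq D_1 \cap K_2 \leq D_1 \cap D_2$: both inclusions are essential monomorphisms because intersecting an essential subobject with an arbitrary subobject preserves essentiality, a standard abelian-categorical fact already used in the non-strict parallel \cite[Lemma~3.4]{CR1}. The hypothesis then supplies a fully invariant direct summand $U$ in which $D_1\cap D_2$ is essential, and composing these essential monomorphisms yields $K_1\cap K_2$ essential in $U$, which is exactly the strictly SIP-extending conclusion for the original pair.

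Part (2) is handled by dualising intersections to sums and essential monomorphisms to the ``lies above'' relation. The analogous reduction exploits two ingredients: first, if $K_i\leq L_i$ with $L_i/K_i$ superfluous in $M/K_i$ for $i=1,2$, then $(L_1+L_2)/(K_1+K_2)$ is superfluous in $M/(K_1+K_2)$ (the pushforward of each $L_i/K_i$ along the projection $M/K_i \to M/(K_1+K_2)$ remains superfluous, and the sum of two superfluous subobjects is superfluous); second, if $U \leq K_1+K_2 \leq L_1+L_2$ with $(K_1+K_2)/U$ superfluous in $M/U$ and $(L_1+L_2)/(K_1+K_2)$ superfluous in $M/(K_1+K_2)$, then $(L_1+L_2)/U$ is superfluous in $M/U$. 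Combining these with the hypothesis applied to $(K_1,K_2)$ gives the conclusion.

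The only step where real care is required is verifying that these essential/superfluous manipulations go through in an abelian category and not merely in a module category; but they reproduce reductions already carried out in the proof of \cite[Lemma~3.4]{CR1}, and the strict version genuinely adds nothing beyond carrying the adjective \emph{fully invariant} from hypothesis to conclusion at the final step, which happens automatically because transitivity of essentiality (resp.\ of superfluousness of quotients) does not interact with full invariance of the ambient direct summand.
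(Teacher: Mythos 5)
Your proof is correct and follows exactly the route the paper intends: the paper omits the argument, referring instead to the non-strict analogue \cite[Lemma~3.4]{CR1}, and your reduction of an arbitrary pair $(K_1,K_2)$ to the pair of ambient direct summands $(D_1,D_2)$ via the essential chain $K_1\cap K_2\leq D_1\cap K_2\leq D_1\cap D_2$ (dually, via transitivity of the lies-above relation together with closure of superfluous subobjects under images and finite sums) is precisely that argument with the adjective \emph{fully invariant} carried along unchanged at the final step. Nothing further is needed.
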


\begin{prop} \label{p:sdr3} Let $M$ and $N$ be objects of an abelian category $\mathcal{A}$. 
\begin{enumerate}
\item Assume that every direct summand of $M$ is isomorphic to a subobject of $N$, and $N$ is strongly $M$-CS-Rickart. Then $M$ is strictly SIP-extending.
\item Assume that every direct summand of $N$ is isomorphic to a factor object of $M$, and $N$ is dual strongly $M$-CS-Rickart. Then $N$ is strictly SSP-lifting.
\end{enumerate}
\end{prop}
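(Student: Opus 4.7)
The plan is to prove only part~(1); part~(2) will follow by dualisation, as is customary throughout this paper. My strategy is to combine the characterisation in Proposition~\ref{st0} with the descent result in Corollary~\ref{c:sdr5} and the two-summand reformulation in Lemma~\ref{l:SIPSSP}, so that verifying the strict SIP-extending property reduces to constructing, for each pair of direct summands $X,Y$ of $M$, a single morphism from a direct summand of $M$ into $N$ whose kernel realises $X\cap Y$.

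First I would apply Proposition~\ref{st0}(1). Under the standing hypothesis that every direct summand of $M$ embeds in $N$, the assumption that $N$ is strongly $M$-CS-Rickart yields both that $N$ is $M$-CS-Rickart and, more importantly here, that $M$ is weak duo. By Lemma~\ref{l:SIPSSP}(1), it then suffices to show that for any two direct summands $X,Y$ of $M$ their intersection is essential in a direct summand of $M$; full invariance will come for free from the weak duo property.

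For the main construction, I would write $M=X\oplus X'$ with canonical projection $\pi:M\to X'$, fix a monomorphism $\mu:X'\to N$ provided by the embedding hypothesis, and let $\iota:Y\to M$ denote the inclusion. The composite $f=\mu\pi\iota:Y\to N$ satisfies ${\rm Ker}(f)=X\cap Y$, since $\mu$ is monic and ${\rm Ker}(\pi)=X$, and $X\cap Y$ is precisely the pullback of $X\hookrightarrow M$ along $\iota$. Corollary~\ref{c:sdr5}(1) tells me that $N$ is strongly $Y$-CS-Rickart, so applying the defining property to $f$ produces a fully invariant direct summand $U$ of $Y$ in which $X\cap Y$ is essential.

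To conclude, since a direct summand of a direct summand is again a direct summand, $U$ is a direct summand of $M$, and the weak duo property of $M$ then upgrades $U$ to a fully invariant direct summand of $M$. The essential monomorphism $X\cap Y\to U$ is intrinsic to the pair and so remains essential when regarded inside $M$. The step I expect to need the most care is the choice of morphism in paragraph three: taking the source to be $Y$ rather than $M$ is what enables the embedding hypothesis (which only provides $X'\hookrightarrow N$, not $X'\oplus Y'\hookrightarrow N$) to produce exactly the kernel $X\cap Y$; everything else is routine bookkeeping with the results already established in Section~2 and in the present section.
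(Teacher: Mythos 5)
Your proof is correct, but it takes a genuinely different (and arguably cleaner) route than the paper. The paper fixes direct summands $M_1,M_2$ of $M$, embeds them as subobjects $N_1,N_2$ of $N$, and then works inside $N$: it passes through the sum $N_1+N_2$, the split exact sequence $0\to M_2\to M_1+M_2\to (M_1+M_2)/M_2\to 0$, and a composite monomorphism $N_1/(N_1\cap N_2)\to N$, finally applying the strongly $M_1$-CS-Rickart property to the induced epimorphism $M_1\to N_1/(N_1\cap N_2)$ with kernel $M_1\cap M_2$. You instead test the strongly $Y$-CS-Rickart property (obtained from Corollary~\ref{c:sdr5}) directly on the single composite $Y\hookrightarrow M\xrightarrow{\pi} X'\hookrightarrow N$, whose kernel is $X\cap Y$ on the nose; this uses the embedding hypothesis only once and avoids all the bookkeeping with sums and quotients inside $N$. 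Both arguments hinge on the same descent result (Theorem~\ref{t:sdr1}/Corollary~\ref{c:sdr5}). One point where your write-up is actually more careful than the paper's: the paper concludes that a fully invariant direct summand $M_3$ of $M_1$ is a fully invariant direct summand of $M$, which is not automatic for full invariance; your explicit appeal to Proposition~\ref{st0} to get that $M$ is weak duo, and hence that \emph{every} direct summand of $M$ is fully invariant, is exactly the justification needed there as well. Your remark that essentiality of $X\cap Y$ in $U$ is intrinsic to the pair and independent of the ambient object is also correct and worth stating.
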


\begin{proof} (1) Let $M_1$ and $M_2$ be direct summands of $M$. Then $M_1\cong N_1$ and $M_2\cong N_2$ for some subobjects $N_1$ and $N_2$ of $N$.
Since $N$ is strongly $M$-CS-Rickart, $N_1+N_2$ is strongly $M$-CS-Rickart by Theorem~\ref{t:sdr1}. Since 
$M_2$ is a direct summand of $M$, the following canonical short exact sequence splits $$0\to M_2\to M_1+M_2\to (M_1+M_2)/M_2\to 0.$$ Then there is an induced composite monomorphism 
$$N_1/(N_1\cap N_2)\cong (N_1+N_2)/N_2\cong (M_1+M_2)/M_2\to M_1+M_2\cong N_1+N_2\to N.$$ By Theorem~\ref{t:sdr1},  $N_1/(N_1\cap N_2)$ is strongly $M$-CS-Rickart, and furthermore, $N_1/(N_1\cap N_2)$ is strongly $M_1$-CS-Rickart, since $M_1$ is a direct summand of $M$. Consider the induced short exact sequence 
$$0\to M_1\cap M_2\to M_1\to N_1/(N_1\cap N_2)\to 0.$$ Since $N_1/(N_1\cap N_2)$ is $M_1$-CS-Rickart, 
$M_1\cap M_2$ is essential in a fully invariant direct summand $M_3$ of $M_1$, and so $M_1\cap M_2$ is essential in a fully invariant direct summand $M_3$ of  $M$. Hence $M$ is strictly SIP-extending by Lemma \ref{l:SIPSSP}. 
\end{proof}

\begin{coll} \label{c:sdr4} Let $\mathcal{A}$ be an abelian category. 
\begin{enumerate}
\item Every strongly self-CS-Rickart object of $\mathcal{A}$ is strictly SIP-extending.
\item Every dual strongly self-CS-Rickart object of $\mathcal{A}$ is strictly SSP-lifting.
\end{enumerate}
\end{coll}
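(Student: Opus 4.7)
The plan is to derive the corollary directly from Proposition~\ref{p:sdr3} by taking $N=M$. First, I would check that the hypothesis of Proposition~\ref{p:sdr3}(1) is trivially satisfied in the self-CS-Rickart setting: with $N=M$, every direct summand of $M$ is (tautologically, via the inclusion section) isomorphic to a subobject of $N=M$. Thus the sole remaining hypothesis is that $M$ is strongly $M$-CS-Rickart, which is exactly the assumption that $M$ is strongly self-CS-Rickart.

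Next, for (1) I would simply invoke Proposition~\ref{p:sdr3}(1) to conclude that $M$ is strictly SIP-extending. For (2), by the duality principle inherent in the abelian-category setting, the hypothesis of Proposition~\ref{p:sdr3}(2) is dually satisfied when $N=M$ (every direct summand of $N=M$ is isomorphic to a factor object of $M$ via the canonical projection), and the conclusion gives that $M$ is strictly SSP-lifting.

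There is no real obstacle here; the content of the corollary is already packaged inside Proposition~\ref{p:sdr3}, and the only thing to verify is the triviality of the isomorphism/subobject (resp.\ factor object) condition when $M=N$. The one point worth being explicit about is that Proposition~\ref{p:sdr3} delivers the ``strict SIP-extending'' (resp.\ ``strict SSP-lifting'') property in its two-subobject form via Lemma~\ref{l:SIPSSP}, which is precisely the characterization used to define the property on arbitrary pairs of direct summands, so no additional extension step is needed.
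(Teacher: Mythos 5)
Your proposal is correct and matches the paper's (implicit) argument exactly: the corollary is stated without proof precisely because it is the special case $N=M$ of Proposition~\ref{p:sdr3}, where the hypothesis that every direct summand of $M$ is isomorphic to a subobject (resp.\ factor object) of $N=M$ holds trivially. Nothing further is needed.
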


\begin{lemm} \label{l:AB} Let $A$ and $B$ be objects of an abelian category $\A$.  
\begin{enumerate}
\item If $A\oplus B$ is strictly SIP-extending, then $B$ is strongly $A$-CS-Rickart.
\item If $A\oplus B$ is strictly SSP-lifting, then $B$ is dual strongly $A$-CS-Rickart.
\end{enumerate} 
\end{lemm}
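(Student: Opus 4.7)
By the duality principle it suffices to establish (1), from which (2) follows. Assuming $A\oplus B$ is strictly SIP-extending and given a morphism $f:A\to B$, the plan is to produce two explicit direct summands of $A\oplus B$ whose intersection identifies with ${\rm Ker}(f)$ inside the canonical copy of $A$; to apply the hypothesis in the form of Lemma \ref{l:SIPSSP} to essentially enclose this kernel in a fully invariant direct summand $C$ of $A\oplus B$; and finally to show that $C$ actually lives inside $A$ and remains a fully invariant direct summand there.

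First, I would set up the relevant summands. Let $\iota_A,\iota_B,\pi_A,\pi_B$ denote the canonical injections and projections of $A\oplus B$, and consider the section $s=\iota_A+\iota_B f:A\to A\oplus B$, which has $\pi_A$ as left inverse; its image $\Delta$ is therefore a direct summand of $A\oplus B$. The canonical copy $A$ is another direct summand, and a routine pullback computation identifies $\Delta\cap A$ with ${\rm Ker}(f)$. By Lemma \ref{l:SIPSSP}, there exists a fully invariant direct summand $C$ of $A\oplus B$ in which ${\rm Ker}(f)$ is essential.

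The main obstacle is to show that $C\subseteq A$, and this is precisely where fully invariance intervenes. Since $C$ is fully invariant, it is stable under the orthogonal idempotent endomorphisms $e_A=\iota_A\pi_A$ and $e_B=\iota_B\pi_B$ of $A\oplus B$. Because $e_A+e_B=1_{A\oplus B}$, their restrictions to $C$ are orthogonal idempotents in $\End_{\A}(C)$ summing to $1_C$, which yields an internal decomposition $C=(C\cap A)\oplus (C\cap B)$. Now ${\rm Ker}(f)$ is essential in $C$ and contained in $A$, while $A\cap B=0$ inside $A\oplus B$; hence the essential condition forces $C\cap B=0$, and $C\subseteq A$ follows.

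It remains to check that, viewed as a subobject of $A$, $C$ is a fully invariant direct summand. Restricting to $A$ the idempotent of $A\oplus B$ with image $C$ produces an idempotent of $A$ with image $C$, exhibiting $C$ as a direct summand of $A$. For full invariance, given any $g\in \End_{\A}(A)$, the endomorphism $\iota_A g\pi_A$ of $A\oplus B$ carries $C$ into $C$ by hypothesis, and a short diagram chase (using that $\iota_A$ is monic, $\pi_A\iota_A=1_A$ and $C\subseteq A$) yields a morphism $\alpha:C\to C$ such that $g\sigma=\sigma\alpha$ for the inclusion $\sigma:C\to A$. Thus $C$ is a fully invariant direct summand of $A$ in which ${\rm Ker}(f)$ is essential, proving that $B$ is strongly $A$-CS-Rickart.
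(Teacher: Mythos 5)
Your proof is correct and follows the route the paper intends (the paper only cites the proof of the non-strict analogue \cite[Lemma~3.7]{CR1}): form the graph section $\iota_A+\iota_B f$ with image $\Delta$, identify $\Delta\cap A$ with ${\rm Ker}(f)$, and invoke Lemma~\ref{l:SIPSSP} to get a fully invariant direct summand $C$ of $A\oplus B$ containing ${\rm Ker}(f)$ essentially. You also correctly supply the two extra steps that the bare citation does not cover and that are genuinely needed for the ``strongly'' version: full invariance of $C$ yields the decomposition $C=(C\cap A)\oplus(C\cap B)$, so essentiality of ${\rm Ker}(f)\subseteq A$ kills $C\cap B$ and forces $C\subseteq A$ (note that $C\cap B=0$ alone would not suffice), and full invariance descends from $A\oplus B$ to the summand $A$ via the endomorphisms $\iota_A g\pi_A$.
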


\begin{proof} This is similar to the proof of \cite[Lemma~3.7]{CR1}.
\end{proof}

\begin{ex} \label{e:SIP} \rm Consider the $\mathbb{Z}$-module $M=\mathbb{Z}_2\oplus \mathbb{Z}_{16}$. Since $\mathbb{Z}_2$ and $\mathbb{Z}_{16}$ are both uniform and hollow, $\mathbb{Z}_{16}$ is strongly $\mathbb{Z}_2$-CS-Rickart and dual strongly $\mathbb{Z}_2$-CS-Rickart, and $\mathbb{Z}_2$ is strongly $\mathbb{Z}_{16}$-CS-Rickart and dual strongly $\mathbb{Z}_{16}$-CS-Rickart.

The direct summands of $M$ are the following: \[\{(0,0)\}, M, A=\langle (1,1)\rangle, B=\langle (0,1)\rangle, C=\langle(1,0)\rangle, D=\langle(1,8)\rangle\] generated by the specified elements of $M$. The fully invariant direct summands of $M$ are only $\{(0,0)\}$ and $M$. Since $A\cap B$ is not essential in $M$ and $A+B$ is not superfluous in $M$, $M$ is neither strictly SIP-extending, nor strictly SSP-lifting by Lemma \ref{l:SIPSSP}. This also shows that $M$ is neither strongly self-CS-Rickart, nor dual strongly self-CS-Rickart by Corollary \ref{c:sdr4}.

On the other hand, $M$ is SIP-extending by \cite[Example~3.9]{CR1}. Also, we have $M=C\oplus B$, where $C\subseteq C+D$ and $B\cap (C+D)$ is  superfluous in $B$. Then $C+D$ lies above the direct summand $C$ of $M$ \cite[41.11]{Wis}. Finally, using \cite[Lemma~3.4]{CR1}, $M$ is SSP-lifting.  
\end{ex}

\section{(Co)products of (dual) relative strongly CS-Rickart objects}

Our main result on (co)products of (dual) strongly relative CS-Rickart objects is the following one.

\begin{theo} \label{t:sdr2} Let $\mathcal{A}$ be an abelian category. 
\begin{enumerate} \item Let $M$, $N_1$ and $N_2$ be objects of $\mathcal{A}$
such that $N_1$ and $N_2$ are strongly $M$-CS-Rickart. Then $N_1\oplus N_2$ is strongly $M$-CS-Rickart.  
\item Let $M_1$, $M_2$ and $N$ be objects of $\mathcal{A}$ such that $N$ is dual strongly $M_1$-CS-Rickart and dual strongly $M_2$-CS-Rickart. Then $N$ is dual strongly $M_1\oplus M_2$-CS-Rickart. 
\end{enumerate}
\end{theo}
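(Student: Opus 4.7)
By the duality principle it suffices to prove part~(1); part~(2) then follows by the dual argument applied to $f:M_1\oplus M_2\to N$. The plan is to decompose a morphism $f:M\to N_1\oplus N_2$ into its two components and handle them sequentially, producing a fully invariant direct summand of $M$ in which ${\rm Ker}(f)$ sits essentially.

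Let $\pi_i:N_1\oplus N_2\to N_i$ denote the canonical projections and set $f_i=\pi_i f$ for $i=1,2$. Since $N_1$ is strongly $M$-CS-Rickart, applying the definition to $f_1$ produces a fully invariant section $\sigma:U\to M$ in which ${\rm Ker}(f_1)$ is essential. Since $U$ is a direct summand of $M$ and $N_2$ is strongly $M$-CS-Rickart, Corollary~\ref{c:sdr5}(1) yields that $N_2$ is strongly $U$-CS-Rickart. Applied to $f_2\sigma:U\to N_2$, this furnishes a fully invariant section $\tau:V\to U$ such that ${\rm Ker}(f_2\sigma)$ is essential in $V$.

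I would then check that the composite $\sigma\tau:V\to M$ is itself fully invariant, hence $V$ is a fully invariant direct summand of $M$. Given $h:M\to M$, full invariance of $\sigma$ provides $\beta:U\to U$ with $h\sigma=\sigma\beta$, and then full invariance of $\tau$ in $U$ provides $\alpha:V\to V$ with $\beta\tau=\tau\alpha$, so that $h(\sigma\tau)=(\sigma\tau)\alpha$, as required.

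Finally, I would show ${\rm Ker}(f)$ is essential in $V$. Identifying kernels with subobjects of $M$, one has ${\rm Ker}(f)={\rm Ker}(f_1)\cap {\rm Ker}(f_2)$; since ${\rm Ker}(f_1)\subseteq U$, this equals ${\rm Ker}(f_1)\cap\bigl({\rm Ker}(f_2)\cap U\bigr)={\rm Ker}(f_1)\cap {\rm Ker}(f_2\sigma)$. As ${\rm Ker}(f_1)$ is essential in $U\supseteq V$, its intersection with $V$ is essential in $V$, and ${\rm Ker}(f_2\sigma)$ is essential in $V$ by construction, so the intersection ${\rm Ker}(f)={\rm Ker}(f_1)\cap V\cap {\rm Ker}(f_2\sigma)$ is essential in $V$. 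I expect the main obstacle to be precisely this last step, namely the careful categorical bookkeeping of the subobject intersections and pullbacks (identifying ${\rm Ker}(f_2\sigma)$ with ${\rm Ker}(f_2)\cap U$ via the pullback along $\sigma$, and invoking stability of essentialness under restriction and binary intersection in an abelian category); all the other steps are formal once the correct summand $V$ has been produced.
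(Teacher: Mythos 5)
Your proof is correct and takes essentially the same route as the paper's: decompose $f$ into its two components, treat them sequentially by passing to the fully invariant summand $U$ produced by the first component, use the direct-summand inheritance result (Corollary~\ref{c:sdr5}) to apply the hypothesis to the restricted second component, and conclude by composing the two fully invariant sections. The only differences are cosmetic: the paper handles $p_2f$ first rather than $p_1f$, and it establishes that $\mathrm{Ker}(f)$ is essential in $V$ via the commutative-diagram factorization borrowed from the proof of \cite[Theorem~4.1]{CR1} instead of your (equally valid) argument with intersections of essential subobjects.
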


\begin{proof} (1) Denote $N=N_1\oplus N_2$, and let $f:M\to N$ be a morphism in $\mathcal{A}$. 
As in the proof of \cite[Theorem~4.1]{CR1}, we have the following induced commutative diagrams
$$\SelectTips{cm}{}
\xymatrix{
 & 0 \ar[d] & 0 \ar[d] & \\
 & K_1 \ar@{=}[r] \ar[d]_{i_1} & K_1 \ar[d]^{i_2} & \\ 
0 \ar[r] & K_2 \ar[r]^-{k_2} \ar[d]_{f_1} & M \ar[r]^-{f_2} \ar[d]^f & N_2 \ar@{=}[d] \ar[r] & 0 \\
0 \ar[r] & N_1 \ar[r]_{j_1} & N \ar[r]_{p_2} & N_2 \ar[r] & 0 
}\qquad \SelectTips{cm}{}
\xymatrix{
&&&\\ 
0 \ar[r] & K_1 \ar[r]^-{i_1} \ar[d]_{e_1} & K_2 \ar[r]^{f_1} \ar[d]^{e_2} & N_1 \ar@{=}[d] \\ 
0 \ar[r] & K \ar[r]_k & U \ar[r]_-{p_1fu} & N_1 \\
}$$
with exact rows and columns, where $j_1:N_1\to N$ is the canonical injection, $p_2:N\to N_2$ is the canonical projection, $e_1:K_1\to K$ and $e_2:K_2\to U$ are essential monomorphisms and $u:U\to M$ is a fully invariant section such that $k_2=ue_2$.  

Since $N_1$ is $U$-CS-Rickart by Theorem \ref{t:sdr1},  
there exist an essential monomorphism $e:K\to V$ and a fully invariant section $v:V\to U$ such that $k=ve$. 
It follows that $${\rm ker}(f)=i_2=k_2i_1=ue_2i_1=uke_1=uvee_1,$$ 
where $ee_1:K_1\to V$ is an essential monomorphism and  $uv:V\to M$ is a fully invariant section. This shows that $N$ is strongly $M$-CS-Rickart.
\end{proof}

Now we may immediately deduce the next theorem.

\begin{theo} \label{t:pr1} Let $\mathcal{A}$ be an abelian category.
\begin{enumerate} 
\item Let $M$ and $N_1,\dots,N_n$ be objects of $\mathcal{A}$. Then $\bigoplus_{i=1}^n N_i$ is strongly $M$-CS-Rickart if and only
if $N_i$ is strongly $M$-CS-Rickart for every $i\in \{1,\dots,n\}$.
\item Let $M_1,\dots,M_n$ and $N$ be objects of $\mathcal{A}$. Then $N$ is dual strongly $\bigoplus_{i=1}^n M_i$-CS-Rickart if and
only if $N$ is dual strongly $M_i$-CS-Rickart for every $i\in \{1,\dots,n\}$.
\end{enumerate}
\end{theo}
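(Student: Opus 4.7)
The plan is to deduce both parts of Theorem \ref{t:pr1} by straightforward induction on $n$, using Theorem \ref{t:sdr2} for the non-trivial implication and Corollary \ref{c:sdr5} for the routine one. Since the work has essentially already been done, there will be no real obstacle beyond bookkeeping.

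For the \emph{if} direction of (1), I would induct on $n$. The case $n=1$ is trivial. For the inductive step, assume $N_1,\dots,N_n$ are each strongly $M$-CS-Rickart and that the statement holds for $n-1$ summands. Then $\bigoplus_{i=1}^{n-1}N_i$ is strongly $M$-CS-Rickart by the induction hypothesis, and applying Theorem \ref{t:sdr2}(1) to the two objects $\bigoplus_{i=1}^{n-1}N_i$ and $N_n$ yields that
\[
\bigoplus_{i=1}^{n}N_i \;=\;\Bigl(\bigoplus_{i=1}^{n-1}N_i\Bigr)\oplus N_n
\]
is strongly $M$-CS-Rickart. For the \emph{only if} direction, each $N_i$ is a direct summand of $\bigoplus_{j=1}^{n}N_j$, so Corollary \ref{c:sdr5}(1), applied with $M$ and with $N'=N_i$ a direct summand of $N=\bigoplus_{j=1}^{n}N_j$, gives immediately that $N_i$ is strongly $M$-CS-Rickart.

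For (2), the cleanest route is to appeal to the duality principle: statement (2) is precisely the dual of (1), so it holds in $\mathcal{A}$ because (1) holds in the abelian category $\mathcal{A}^{\mathrm{op}}$. Alternatively, one can repeat the induction verbatim, using Theorem \ref{t:sdr2}(2) for the inductive step and Corollary \ref{c:sdr5}(2) for the converse, with $M'=M_i$ as a direct summand of $\bigoplus_{j=1}^n M_j$.

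The only point worth a brief comment is the inductive step itself: Theorem \ref{t:sdr2}(1) is stated for \emph{two} summands $N_1,N_2$, so one must be careful to regroup $\bigoplus_{i=1}^{n}N_i$ as a direct sum of two objects in order to apply it; this is legitimate because the canonical isomorphism $\bigoplus_{i=1}^{n}N_i \cong \bigl(\bigoplus_{i=1}^{n-1}N_i\bigr)\oplus N_n$ allows us to transport the strongly $M$-CS-Rickart property (a property invariant under isomorphism of the target). Apart from this, nothing else requires attention, and no step poses a real difficulty.
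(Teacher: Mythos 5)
Your proposal is correct and matches the paper's own argument: the paper proves Theorem \ref{t:pr1} exactly by combining Corollary \ref{c:sdr5} (for the ``only if'' direction) with Theorem \ref{t:sdr2} (iterated, for the ``if'' direction). Your write-up merely makes the induction and the regrouping of summands explicit, which the paper leaves implicit.
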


\begin{proof} This follows by Corollary~\ref{c:sdr5} and Theorem~\ref{t:sdr2}.
\end{proof}

In order to get some similar results in case of arbitrary (co)products, we need to impose some finiteness conditions.

\begin{coll} \label{c:pr2} Let $\mathcal{A}$ be an abelian category.
\begin{enumerate} \item Assume that $\mathcal{A}$ has coproducts, let $M$ be a finitely generated object of
$\mathcal{A}$, and let $(N_i)_{i\in I}$ be a family of objects of $\mathcal{A}$. Then $\bigoplus_{i\in I}
N_i$ is strongly $M$-CS-Rickart if and only if $N_i$ is strongly $M$-CS-Rickart for every $i\in I$.

\item Assume that $\mathcal{A}$ has products, let $N$ be a finitely cogenerated object of $\mathcal{A}$, and let
$(M_i)_{i\in I}$ be a family of objects of $\mathcal{A}$. Then $N$ is dual strongly $\prod_{i\in I} M_i$-CS-Rickart if and only if $N$ is dual strongly $M_i$-CS-Rickart for every $i\in I$.
\end{enumerate}
\end{coll}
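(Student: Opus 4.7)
The plan is to reduce both statements to the finite-index case handled by Theorem~\ref{t:pr1}, using the finite generation of $M$ in part (1) and the finite cogeneration of $N$ in part (2) to restrict attention, for any given morphism, to only finitely many factors of the (co)product.

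For (1), the forward direction is immediate from Corollary~\ref{c:sdr5}(1), since each $N_j$ is a direct summand of $\bigoplus_{i\in I}N_i$ via the canonical injection and projection. For the converse, let $f:M\to \bigoplus_{i\in I}N_i$ be an arbitrary morphism. Because $M$ is finitely generated, ${\rm Im}(f)$ is a finitely generated subobject of the coproduct, hence is contained in $\bigoplus_{i\in F}N_i$ for some finite $F\subseteq I$. Writing $\iota:\bigoplus_{i\in F}N_i\to \bigoplus_{i\in I}N_i$ for the canonical section, we have $f=\iota g$ for a unique $g:M\to \bigoplus_{i\in F}N_i$, and ${\rm Ker}(f)={\rm Ker}(g)$ since $\iota$ is a monomorphism. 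By Theorem~\ref{t:pr1}(1), $\bigoplus_{i\in F}N_i$ is strongly $M$-CS-Rickart, so ${\rm Ker}(g)$ is essential in a fully invariant direct summand of $M$, yielding the required conclusion for $f$.

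Part (2) is dual. The forward direction uses Corollary~\ref{c:sdr5}(2), since each $M_j$ is a direct summand of $\prod_{i\in I}M_i$. For the converse, given any $f:\prod_{i\in I}M_i\to N$, the finite cogeneration hypothesis on $N$ is used to produce a factorization $f=g\pi$ through the canonical retraction $\pi:\prod_{i\in I}M_i\to \prod_{i\in F}M_i$ for some finite $F\subseteq I$. Since $\pi$ is an epimorphism, ${\rm Im}(f)={\rm Im}(g)$, and Theorem~\ref{t:pr1}(2) applied to $g$ guarantees that ${\rm Im}(f)$ lies above a fully invariant direct summand of $N$, finishing the proof.

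The main obstacle is the factorization step in part (2), which is subtler than its counterpart in (1) where finite generation of $M$ directly bounds ${\rm Im}(f)$ inside a finite subcoproduct. The analogous reduction for the non-strong relative CS-Rickart property already appears in the authors' earlier work \cite{CR1}, so one may invoke the same categorical argument there; the strongly-CS-Rickart case is then a routine bookkeeping consequence of Theorem~\ref{t:pr1} once the reduction to a finite sub(co)product has been performed.
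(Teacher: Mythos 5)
Your proof is correct and takes essentially the same route as the paper: factor $f$ through a finite subcoproduct using the finite generation of $M$, apply Theorem~\ref{t:pr1}, observe that the kernels agree, and use Corollary~\ref{c:sdr5} for the other implication, with part (2) obtained dually. The factorization concern you raise for part (2) is handled precisely by the duality principle the paper invokes throughout, so no separate argument is needed.
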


\begin{proof} (1) First, let $f:M\to
\bigoplus_{i\in I} N_i$ be a morphism in $\mathcal{A}$. Since $M$ is finitely generated, we may write
$f=jf'$ for some morphism $f':M\to \bigoplus_{i\in F} N_i$ and inclusion morphism $j:\bigoplus_{i\in F} N_i\to
\bigoplus_{i\in I} N_i$, where $F$ is a finite subset of $I$. By Theorem~\ref{t:pr1}, $\bigoplus_{i\in F} N_i$ is
strongy $M$-CS-Rickart. Then there exist an essential monomorphism $e:{\rm Ker}(f')\to U$ and a section $u:U\to M$ such that ${\rm ker}(f')=ue$. 
But ${\rm ker}(f)={\rm ker}(f')$. Hence $\bigoplus_{i\in I} N_i$ is strongly $M$-CS-Rickart. 

The converse follows by Corollary~\ref{c:sdr5}.
\end{proof}

A necessary condition for an infinite (co)product of objects to be a (dual) strongly self-CS-Rickart object is given in the following proposition.

\begin{prop} \label{p:pr3} Let $(M_i)_{i\in I}$ be a family of objects of an abelian category $\mathcal{A}$.
\begin{enumerate} 
\item If $\prod_{i\in I} M_i$ is a strongly self-CS-Rickart object, then $M_i$ is strongly $M_j$-CS-Rickart for every $i,j\in I$.
\item If $\bigoplus_{i\in I} M_i$ is a dual self-CS-Rickart object, then $M_i$ is dual $M_j$-CS-Rickart for every
$i,j\in I$.
\end{enumerate}
\end{prop}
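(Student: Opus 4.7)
The plan is to reduce both parts directly to Corollary~\ref{c:sdr5} by observing that each factor of a product (respectively each summand of a coproduct) is a direct summand of the whole object, even in the infinite case.

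For part~(1), set $M:=\prod_{i\in I}M_i$. I would first note that the universal property of the product furnishes, for each $j\in I$, a canonical morphism $\iota_j:M_j\to M$ determined by $\pi_j\iota_j=1_{M_j}$ and $\pi_k\iota_j=0$ for $k\neq j$, where $\pi_i:M\to M_i$ denotes the canonical projection. Hence $\iota_j$ is a section and $M_j$ is a direct summand of $M$; the same applies to $M_i$. Since $M$ is strongly self-CS-Rickart, i.e.\ strongly $M$-CS-Rickart, Corollary~\ref{c:sdr5}(1) applied with $M':=M_j$ and $N':=M_i$ immediately yields that $M_i$ is strongly $M_j$-CS-Rickart.

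For part~(2), I would use the dual argument. The universal property of the coproduct $\bigoplus_{i\in I}M_i$ produces, for every $j\in I$, a retraction $\pi_j:\bigoplus_{i\in I}M_i\to M_j$ splitting the canonical injection $\iota_j:M_j\to \bigoplus_{i\in I}M_i$, so every $M_j$ is a direct summand of $\bigoplus_{i\in I}M_i$. Applying Corollary~\ref{c:sdr5}(2) to the (dual strongly) self-CS-Rickart object $\bigoplus_{i\in I}M_i$, with the choices $M':=M_j$ and $N':=M_i$, gives the conclusion.

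No essential obstacle is anticipated: the entire argument reduces to the routine observation that each $M_j$ is a direct summand of the ambient (co)product via the canonical projection/injection pair, so Corollary~\ref{c:sdr5} applies verbatim.
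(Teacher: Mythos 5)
Your proof is correct and is essentially the paper's own argument: the authors simply cite Theorem~\ref{t:sdr1}, applied to the retraction $\pi_j:\prod_{i\in I}M_i\to M_j$ and the section $\iota_i:M_i\to\prod_{i\in I}M_i$, which is exactly the direct-summand reduction you carry out via Corollary~\ref{c:sdr5}. (The only cosmetic point is that part~(2) of the statement is phrased without ``strongly,'' so strictly one invokes the non-strong analogue from \cite{CR1}, but the mechanism is identical.)
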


\begin{proof} This follows by Theorem~\ref{t:sdr1}.
\end{proof}

\begin{ex} \rm Consider the $\mathbb{Z}$-module $M=\mathbb{Z}_2\oplus \mathbb{Z}_{16}$. 
We have seen in Example \ref{e:SIP} that $\mathbb{Z}_{16}$ is (dual) strongly $\mathbb{Z}_2$-CS-Rickart. Also, $\mathbb{Z}_2$ is clearly (dual) strongly $\mathbb{Z}_{16}$-CS-Rickart. Also, both $\mathbb{Z}_2$ and $\mathbb{Z}_{16}$ 
are (dual) strongly self-CS-Rickart, because they are uniform (hollow). On the other hand, $M=\mathbb{Z}_2\oplus \mathbb{Z}_{16}$ is not (dual) strongly self-CS-Rickart, because it is not (dual) self-CS-Rickart \cite[Example~3.9]{CR1}. Hence the converse of Proposition \ref{p:pr3} does not hold.
\end{ex}

\begin{theo} Let $\mathcal{A}$ be an abelian category.
\begin{enumerate}
\item Let $M$ be a strictly SSIP-extending object of $\mathcal{A}$, and let $(N_i)_{i\in I}$ be a family of objects of $\mathcal{A}$ having a product. Then $\prod_{i\in I} N_i$ is
strongly $M$-CS-Rickart if and only if $N_i$ is strongly $M$-CS-Rickart for every $i\in I$.
\item Let $(M_i)_{i\in I}$ be a family of objects of $\mathcal{A}$ having a coproduct, and let $N$ be a strictly SSSP-lifting object of $\mathcal{A}$. Then $N$ is dual strongly $\bigoplus_{i\in I} M_i$-CS-Rickart if and only if $N$ is dual strongly $M_i$-CS-Rickart for every $i\in I$.
\end{enumerate}
\end{theo}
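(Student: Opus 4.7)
By the duality principle, it suffices to establish (1); the plan for (2) will be strictly dual, replacing kernels by cokernels, intersections by sums, essential monomorphisms by superfluous epimorphisms, and strictly SSIP-extending by strictly SSSP-lifting.

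The forward implication (``only if'') is immediate from Corollary~\ref{c:sdr5}: each $N_j$ is a direct summand of $\prod_{i\in I}N_i$ (via the canonical projection, which is a retraction), so if the product is strongly $M$-CS-Rickart, then so is every factor.

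For the converse, let $f:M\to\prod_{i\in I}N_i$ be an arbitrary morphism, and let $p_i:\prod_{i\in I}N_i\to N_i$ denote the canonical projection for each $i\in I$. For every $i\in I$, the morphism $p_if:M\to N_i$ has, by strong $M$-CS-Rickartness of $N_i$, kernel ${\rm Ker}(p_if)$ essential in a fully invariant direct summand $U_i$ of $M$. In particular each ${\rm Ker}(p_if)$ is essential in a direct summand of $M$. Because $\prod_{i\in I}N_i$ is a product with projections $p_i$, the universal property yields
\[
{\rm Ker}(f)\;=\;\bigcap_{i\in I}{\rm Ker}(p_if),
\]
an intersection of subobjects of $M$ each essential in a direct summand. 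Since $M$ is strictly SSIP-extending, this intersection is essential in a fully invariant direct summand $U$ of $M$, i.e.\ ${\rm Ker}(f)$ is essential in a fully invariant direct summand of $M$. Thus $\prod_{i\in I}N_i$ is strongly $M$-CS-Rickart.

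The main (minor) subtlety is justifying the identity ${\rm Ker}(f)=\bigcap_{i\in I}{\rm Ker}(p_if)$ categorically: a morphism $h:X\to M$ satisfies $fh=0$ iff $p_ifh=0$ for every $i\in I$, which expresses ${\rm Ker}(f)$ as the intersection (i.e.\ the wide pullback) of the family $\{{\rm Ker}(p_if)\}_{i\in I}$. This intersection exists because $\A$ has the relevant product. Once this is recorded, the argument reduces to a single invocation of the strictly SSIP-extending hypothesis. For part (2), the analogous step is that the image of a morphism $g:\bigoplus_{i\in I}M_i\to N$ is the sum $\sum_{i\in I}{\rm Im}(g\sigma_i)$ of the images along the canonical injections $\sigma_i:M_i\to\bigoplus_{i\in I}M_i$; each ${\rm Im}(g\sigma_i)$ lies above a fully invariant direct summand of $N$ by hypothesis, and strictly SSSP-lifting of $N$ concludes the argument.
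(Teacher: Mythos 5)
Your proposal is correct and follows essentially the same route as the paper: decompose $f$ into its components $f_i=p_if$, note each ${\rm Ker}(f_i)$ is essential in a (fully invariant) direct summand, and apply the strictly SSIP-extending hypothesis to the intersection ${\rm Ker}(f)=\bigcap_{i\in I}{\rm Ker}(f_i)$, with the forward implication coming from the direct-summand closure result (the paper cites Theorem~\ref{t:sdr1} directly, you cite its Corollary~\ref{c:sdr5}, which is the same thing). Your explicit justification of the identity ${\rm Ker}(f)=\bigcap_{i\in I}{\rm Ker}(p_if)$ is a small but welcome addition that the paper leaves implicit.
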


\begin{proof} (1) First, assume that $N_i$ is strongly $M$-CS-Rickart for every $i\in I$. Let $f:M\to \prod_{i\in I} N_i$ be a morphism in $\mathcal{A}$. For every $i\in I$, denote by $p_i:\prod_{i\in I} N_i\to N_i$ the canonical projection and $f_i=p_if:M\to N_i$. Since $N_i$ is strongly $M$-CS-Rickart, ${\rm Ker}(f_i)$ is essential in a fully invariant direct summand of $M$ 
for every $i\in I$. Since $M$ is strictly SSIP-extending, 
${\rm Ker}(f)=\bigcap_{i\in I} {\rm Ker}(f_i)$ is essential in a fully invariant direct summand of $M$. 
Hence $\prod_{i\in I} N_i$ is strongly $M$-CS-Rickart.

The converse follows by Theorem \ref{t:sdr1}. 
\end{proof}

The coproduct of two (dual) strongly self-CS-Rickart objects need not be (dual) strongly self-CS-Rickart, 
as the following example shows.

\begin{ex} \label{ec1} \rm (i) $\mathbb{Z}_2$ is a (dual) strongly self-CS-Rickart $\mathbb{Z}$-module, but $\mathbb{Z}_2\oplus \mathbb{Z}_2$ is not a (dual) strongly self-CS-Rickart $\mathbb{Z}$-module.

(ii) Consider the ring $R=\begin{pmatrix}{K}&K\\0&K \end{pmatrix}$ for some field $K$. The right $R$-modules $M_1=\begin{pmatrix}K&K\\0&0 \end{pmatrix}$ 
and $M_2=\begin{pmatrix}0&0\\0&K \end{pmatrix}$
are (dual) self-Rickart and ${\rm End}(M_1)\cong {\rm End}(M_2)\cong K$, and so $M_1$ and  $M_2$ are (dual) strongly self-CS-Rickart by Proposition \ref{st1}. But 
$R=M_1\oplus M_2$ is not  (dual) strongly self-CS-Rickart again by Proposition \ref{st1}.     
\end{ex}

But we may give the following theorem.

\begin{theo} \label{t:pstr4} Let $\mathcal{A}$ be an abelian category, and let $M=\bigoplus_{i\in I}M_i$ be a direct sum decomposition in $\mathcal{A}$. 
\begin{enumerate}
\item Then $M$ is strongly self-CS-Rickart if and only if $M_i$ is strongly self-CS-Rickart for each $i\in I$ and $\Hom_{\mathcal{A}}(M_i,M_j)=0$ for every $i,j\in I$ with $i\neq j$.
\item Assume that $I$ is finite. Then $M$ is  dual strongly self-CS-Rickart if and only if $M_i$ is dual strongly self-CS-Rickart for each $i\in I$ and $\Hom_{\mathcal{A}}(M_i,M_j)=0$ for every $i,j\in I$ with $i\neq j$.
\end{enumerate}
\end{theo}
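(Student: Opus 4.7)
By the duality principle it suffices to treat part (1); the forward direction of (2) is symmetric (using fully coinvariant retractions in place of fully invariant sections), and the converse of (2) will follow by combining the $\Hom$-vanishing hypothesis with Theorem~\ref{t:pr1}.

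For the forward direction of (1), assume $M$ is strongly self-CS-Rickart. Corollary~\ref{c:sdr5} gives at once that each direct summand $M_i$ of $M$ is strongly self-CS-Rickart. By Corollary~\ref{st00}, $M$ is also weak duo, so every canonical injection $\iota_i:M_i\to M$ is a fully invariant section. For $i\neq j$ and $g:M_i\to M_j$, apply the full invariance of $\iota_i$ to the endomorphism $h=\iota_j g\pi_i$ of $M$: there is $\alpha:M_i\to M_i$ with $h\iota_i=\iota_i\alpha$, which simplifies to $\iota_j g=\iota_j g\pi_i\iota_i=\iota_i\alpha$; composing with $\pi_j$ (using $\pi_j\iota_i=0$) yields $g=0$.

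For the converse of (1), assume each $M_i$ is strongly self-CS-Rickart and $\Hom_{\A}(M_i,M_j)=0$ for $i\neq j$. The $\Hom$-vanishing forces every morphism $\phi:M_i\to M$ to equal $\iota_i\pi_i\phi$ (since the components $\pi_j\phi$ vanish for $j\neq i$), so every $f\in\End_{\A}(M)$ is diagonal: $f\iota_i=\iota_if_i$ with $f_i=\pi_if\iota_i$. Consequently $\mathrm{Ker}(f)=\bigoplus_i\mathrm{Ker}(f_i)$, and since each $\mathrm{Ker}(f_i)$ is essential in a fully invariant direct summand $U_i$ of $M_i$, the subobject $\bigoplus_iU_i$ is a direct summand of $M$ in which $\mathrm{Ker}(f)$ is essential. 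The diagonal form of every endomorphism of $M$, combined with the full invariance of each $U_i$ in $M_i$, then makes $\bigoplus_iU_i$ fully invariant in $M$, so $M$ is strongly self-CS-Rickart. For the converse of (2) with $I$ finite, apply Theorem~\ref{t:pr1}(2) to $N=M$ together with the dual of Theorem~\ref{t:pr1}(1) to reduce to the statement that $M_j$ is dual strongly $M_i$-CS-Rickart for every pair $i,j\in I$; this holds by hypothesis when $i=j$ and is vacuous when $i\neq j$, since the only morphism $M_i\to M_j$ is zero (with zero image, a fully invariant direct summand).

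The main obstacle is the diagonalisation step in the converse of (1): verifying that a morphism $\phi:M_i\to M$ with $\pi_j\phi=0$ for every $j\neq i$ coincides with $\iota_i\pi_i\phi$. Setting $\psi=\phi-\iota_i\pi_i\phi$ one checks that $\pi_k\psi=0$ for every $k\in I$; for finite $I$ this is immediate from the biproduct identity $\sum_k\iota_k\pi_k=\mathrm{id}_M$, but for infinite $I$ one has to invoke that the projection family $(\pi_k)_{k\in I}$ of the coproduct is jointly monic in the abelian-category setting under consideration. The analogous care is needed to confirm that $\bigoplus_i\mathrm{Ker}(f_i)$ is essential in $\bigoplus_iU_i$ and that $\bigoplus_iU_i$ is fully invariant in $M$.
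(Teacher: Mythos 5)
Your treatment of part (1) is essentially the paper's own proof: the forward direction via Corollary~\ref{c:sdr5} and the weak-duo property from Corollary~\ref{st00}, and the converse via the observation that the $\Hom$-vanishing diagonalises every endomorphism, so that $\mathrm{Ker}(f)=\bigoplus_i\mathrm{Ker}(f_i)$ sits essentially inside $\bigoplus_iU_i$, which is a fully invariant direct summand. (The paper certifies full invariance of $\bigoplus_i U_i$ via the central idempotent $\bigoplus_i s_ip_i$ and \cite[Lemma~2.13]{CO1} rather than your direct argument, but both work, and you flag the same infinite-index technicalities the paper glosses over.)

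The explicit argument you give for the converse of part (2) is, however, wrong. You invoke ``the dual of Theorem~\ref{t:pr1}(1)'' to split ``$\bigoplus_jM_j$ is dual strongly $M_i$-CS-Rickart'' into ``$M_j$ is dual strongly $M_i$-CS-Rickart for each $j$''. But under the duality principle the statement ``$N$ is strongly $M$-CS-Rickart'' in $\A$ becomes ``$M$ is dual strongly $N$-CS-Rickart'' in $\A^{\mathrm{op}}$ (domain and codomain swap), so the dual of Theorem~\ref{t:pr1}(1) is precisely Theorem~\ref{t:pr1}(2) itself; it says nothing about finite direct sums in the \emph{codomain} slot of the dual notion. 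The statement you actually need --- that $N_1\oplus N_2$ is dual strongly $M$-CS-Rickart whenever $N_1$ and $N_2$ are --- is false in general, and your reduction therefore proves too much: it would show that $M$ is dual strongly self-CS-Rickart whenever $M_j$ is dual strongly $M_i$-CS-Rickart for every pair $(i,j)$, with no use of the $\Hom$-vanishing hypothesis at that stage. The paper's own example $M=\mathbb{Z}_2\oplus\mathbb{Z}_{16}$ refutes this: there $M_j$ is dual strongly $M_i$-CS-Rickart for all $i,j$ (both summands being uniform and hollow), yet $M$ is not dual strongly self-CS-Rickart. The repair is either to drop this reduction entirely and note that, since $I$ is finite, part (2) is the exact dual of part (1) (finite biproducts are self-dual, and the $\Hom$-vanishing condition is symmetric as a family), or to rerun your diagonal argument with images and cokernels, using that a \emph{finite} direct sum of superfluous epimorphisms is superfluous --- which is exactly where the finiteness of $I$ enters and why part (1) has no such restriction.
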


\begin{proof} (1) Assume that $M$ is strongly self-CS-Rickart. Then $M$ is weak duo by Corollary \ref{st00}, hence its direct summand $M_i$ is fully invariant for each $i\in I$. It follows that ${\rm Hom}_{\mathcal{A}}(M_i,M_j)=0$ for every $i,j\in I$ with $i\neq j$, and $M_i$ is strongly self-CS-Rickart for every $i\in I$ by Corollary \ref{c:sdr5}.

Conversely, assume that $M_i$ is strongly self-CS-Rickart for every $i\in I$. Let $f:M\to M$ be a morphism in $\mathcal{A}$. Then $f=\bigoplus_{i\in I}f_i$ for some morphisms $f_i:M_i\to M_i$ ($i\in I$) and $K={\rm Ker}(f)=\bigoplus_{i\in I}{\rm Ker}(f_i)$.
Now let $i\in I$. Since $M_i$ is strongly self-CS-Rickart, there are an essential monomorphism $u_i:K_i\to U_i$ and a fully invariant section $s_i:U_i\to M_i$ such that ${\rm ker}(f_i)=s_iu_i$. Then we may write $k=su$, where 
$u=\bigoplus_{i\in I}u_i:\bigoplus_{i\in I}K_i\to \bigoplus_{i\in I}U_i$ is an essential monomorphism 
and $s=\bigoplus_{i\in I}s_i:\bigoplus_{i\in I}U_i\to \bigoplus_{i\in I}M_i$ is a section. 
For each $i\in I$, there exists $p_i:M_i\to U_i$ such that $p_is_i=1_{U_i}$ and $e_i=s_ip_i$ is an idempotent in the abelian ring ${\rm End}_{\mathcal{A}}(M)$. Denote $p=\bigoplus_{i\in I}p_i:\bigoplus_{i\in I}M_i\to \bigoplus_{i\in I}U_i$. Then $ps=1_{\bigoplus_{i\in I}U_i}$ and $e=sp=\bigoplus_{i\in I}s_ip_i=\bigoplus_{i\in I}e_i$ is a central idempotent of the ring ${\rm End}_{\mathcal{A}}(M)$. Now by \cite[Lemma~2.13]{CO1}, $s$ is a fully invariant section. This shows that $M$ is strongly self-CS-Rickart.

(2) This part uses images instead of kernels. We point out that here we need the index set $I$ to be finite, because in general a direct sum of superfluous epimorphisms is not  superfluous.
\end{proof}

We end this section with some results on the structure of (dual) strongly self-CS-Rickart modules over a Dedekind domain.

\begin{coll} \label{c1:dede}
Let $R$ be a Dedekind domain with quotient field $K$, and let $M$ be a non-zero $R$-module. 

\begin{enumerate}[(i)] 
\item Assume that $M$ is torsion. The following are equivalent:
\begin{enumerate}[(1)]
\item $M$ is strongly self-CS-Rickart.
\item $M$ is dual strongly self-CS-Rickart.
\item $M$ is weak duo. 
\item $M\cong \bigoplus_{i\in I}M_i$, where for each $i\in I$, either $M_i\cong E(R/P_i)$ or $M_i\cong R/P_i^{n_i}$ for some distinct maximal ideals $P_i$ of $R$ and positive integers $n_i$. 
\end{enumerate}

\item Assume that $M$ is finitely generated. 
\begin{enumerate}[(1)]
\item The following are equivalent:
\begin{enumerate}[(a)] 
\item $M$ is strongly self-CS-Rickart.
\item $M$ is weak duo. 
\item $M\cong J$ for some ideal $J$ of $R$ or $M\cong \bigoplus_{i=1}^kR/P_i^{n_i}$ for some distinct maximal ideals $P_1,\dots,P_k$ of $R$ and positive integers $n_1,\dots,n_k$.
\end{enumerate}
\item The following are equivalent:
\begin{enumerate}[(a)]
\item $M$ is dual strongly self-CS-Rickart.
\item $M\cong \bigoplus_{i=1}^kR/P_i^{n_i}$ for some distinct maximal ideals $P_i$ of $R$ and positive integers $n_1,\dots,n_k$. 
\end{enumerate}
\end{enumerate}

\item Assume that $M$ is injective. Then the following are equivalent:
\begin{enumerate}[(1)]
\item $M$ is strongly self-CS-Rickart.
\item $M$ is dual strongly self-CS-Rickart.
\item $M\cong K$ or $M\cong \bigoplus_{i\in I} E(R/P_i)$ for some distinct maximal ideals $P_i$ of $R$. 
\end{enumerate}
\end{enumerate}
\end{coll}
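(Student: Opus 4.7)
The plan combines three tools throughout all three parts: Corollary~\ref{st00} (strongly self-CS-Rickart is equivalent to self-CS-Rickart plus weak duo, and dually); Theorem~\ref{t:pstr4} (for a direct sum decomposition, the whole is (dual) strongly self-CS-Rickart if and only if each summand is, together with $\Hom$-vanishing between distinct summands); and the classifications of (dual) self-CS-Rickart modules over a Dedekind domain already established in \cite{CR1}. With these in place, the task reduces to identifying which (dual) self-CS-Rickart Dedekind modules from \cite{CR1} are additionally weak duo, and reading off the resulting list using the standard Dedekind structure theorems.

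For part (i), I would decompose the torsion module as $M=\bigoplus_P M_P$ into its $P$-primary components. Since distinct primary components are $\Hom$-orthogonal, Theorem~\ref{t:pstr4} reduces the conditions to each $M_P$ being (dual) strongly self-CS-Rickart. Any splitting $M_P=A\oplus B$ into non-zero $P$-primary summands admits a non-zero morphism between $A$ and $B$ (their socles contain a copy of $R/P$, which embeds into any non-zero $P$-primary module), so Theorem~\ref{t:pstr4} forces $M_P$ to be indecomposable. Over a Dedekind domain the indecomposable $P$-primary modules are precisely $R/P^n$ and $E(R/P)$, both of which are uniform and hollow, hence (dual) strongly self-CS-Rickart by Remark~2.4. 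For part (ii), I would apply the structure theorem for finitely generated Dedekind modules to write $M\cong J_1\oplus\cdots\oplus J_t\oplus\bigoplus_i R/P_i^{n_i}$ with the $J_\ell$ non-zero fractional ideals. The (dual) self-CS-Rickart classification of \cite{CR1} forces $t\le 1$ in the self-CS-Rickart case and $t=0$ in the dual case (a non-zero fractional ideal is not dual self-CS-Rickart: multiplication by a non-unit has image that is neither superfluous nor a direct summand). The non-vanishing $\Hom_R(J,R/P^n)\neq 0$, via the composite $J\twoheadrightarrow J/PJ\cong R/P\hookrightarrow R/P^n$, combined with Theorem~\ref{t:pstr4}, then rules out the mixed case $t=1$ with non-trivial torsion. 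For part (iii), injective Dedekind modules decompose as $K^{(\alpha)}\oplus\bigoplus_j E(R/P_j)^{(\beta_j)}$, and the $\Hom$-computations $\End_R(K)=K$, $\End_R(E(R/P))=\widehat{R_P}$, $\Hom_R(E(R/P),E(R/Q))=0$ for $P\neq Q$, $\Hom_R(E(R/P),K)=0$, together with $\Hom_R(K,E(R/P))\neq 0$ (via $K\twoheadrightarrow K/R$ followed by primary projection), combined with Theorem~\ref{t:pstr4}, force $M\cong K$ or $M\cong\bigoplus_i E(R/P_i)$ with distinct $P_i$.

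Conversely, the modules listed in each structural description are directly verified to be (dual) strongly self-CS-Rickart: each indecomposable summand is uniform and hollow, distinct summands are $\Hom$-orthogonal, and any endomorphism is therefore coordinatewise. Its kernel is essential in the fully invariant direct summand formed by those primary components in which the kernel is non-zero (using uniformity), while its image lies above the fully invariant direct summand formed by the ``full'' components (using hollowness and the fact that a direct sum of superfluous submodules supported on pairwise distinct primes is itself superfluous in the ambient quotient).

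The main technical obstacle is the dual direction in parts (i) and (iii) for infinite index sets, since Theorem~\ref{t:pstr4}(2) is stated only for finite $I$. I would handle this by the direct calculation sketched in the previous paragraph: submodules of $\bigoplus_i M_i$ supported on pairwise distinct primes decompose coordinatewise, so any coordinatewise-superfluous image is actually superfluous in the ambient quotient, which is exactly what the dual strongly self-CS-Rickart definition requires.
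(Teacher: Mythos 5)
Your strategy is genuinely different from the paper's. The paper outsources almost all of the structure theory to the literature: the equivalence of ``weak duo'' with the structural descriptions is quoted from \cite{OHS} (Theorem~3.10, Corollary~3.11, Theorem~2.7 there), extending and lifting are quoted from \cite{KM} and \cite{MM}, the failure of an ideal $J$ to be dual self-CS-Rickart is quoted from \cite{Tribak}, and the injective case rests on Kaplansky's decomposition \cite{K52}. You instead re-derive the weak duo classification from scratch using Theorem~\ref{t:pstr4} and $\Hom$-orthogonality, which is a legitimate and more self-contained route; in particular your treatment of the infinite dual case (submodules of a sum of pairwise coprime primary components decompose coordinatewise, so a coordinatewise superfluous image is superfluous) is exactly the right fix for the finiteness restriction in Theorem~\ref{t:pstr4}(2).

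That said, several steps need repair. (a) Your justification that a splitting $M_P=A\oplus B$ into non-zero $P$-primary pieces admits a non-zero morphism between them does not work as stated: embedding $R/P$ into both $A$ and $B$ gives no map $A\to B$ or $B\to A$ (indeed $\Hom_R(E(R/P),R/P)=0$, since the image of a divisible module is divisible). You need a case split: if $A$ is not divisible then $R/P$ is a quotient of $A$ and embeds in $B$; if $A$ is divisible then it is injective and the inclusion of a simple submodule of $B$ into $A$ extends to a non-zero map $B\to A$. (b) The statement that the indecomposable $P$-primary modules are exactly $R/P^n$ and $E(R/P)$ is a genuine theorem requiring Kulikov's basic subgroup theory; it is not cheaper than simply citing the weak duo classification of \cite{OHS} as the paper does. (c) Condition (3) of (i) and condition (b) of (ii)(1) are ``weak duo'' alone, so you must also prove that weak duo implies the structural form, not only that strongly self-CS-Rickart does; your $\Hom$-orthogonality mechanism still applies (weak duo makes complementary direct summands $\Hom$-orthogonal), but as written everything is routed through Theorem~\ref{t:pstr4}, whose hypothesis is the stronger one. (d) In the converse of (iii) the blanket claim that each summand is ``uniform and hollow'' fails for $K$: the quotient field is uniform but not hollow (for $R=\mathbb{Z}$, $\mathbb{Z}_{(7)}+\mathbb{Z}[1/7]=\mathbb{Q}$ with both summands proper); for $M\cong K$ you must instead use that $\End_R(K)\cong K$ is a field, so every endomorphism is zero or an isomorphism. (e) ``Self-CS-Rickart forces $t\le 1$'' is false as stated ($R\oplus R$ is self-Rickart, hence self-CS-Rickart); it is the weak duo/$\Hom$-orthogonality condition, via $\Hom_R(J_1,J_2)\neq 0$, that forces $t\le 1$, and that argument is available to you.
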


\begin{proof} (i) The implications (1)$\Rightarrow$(3) and (2)$\Rightarrow$(3) follow by Corollary \ref{st00}, while 
the equivalence (3)$\Leftrightarrow$(4) is true by \cite[Theorem~3.10]{OHS}.

(4)$\Rightarrow$(1) and (4)$\Rightarrow$(2) Under the hypothesis (4), $M$ is extending \cite[Corollary~23]{KM} and lifting \cite[Propositions~A.7,A.8]{MM}, hence $M$ is self-CS-Rickart and dual self-CS-Rickart. But $M$ is also weak duo by the equivalence (3)$\Leftrightarrow$(4), and thus $M$ is strongly self-CS-Rickart and dual strongly self-CS-Rickart.

(ii) The implication (a)$\Rightarrow$(b) follows by Corollary \ref{st00}, while 
the equivalence (b)$\Leftrightarrow$(c) is true by \cite[Corollary~3.11]{OHS}.

(c)$\Rightarrow$(a) If $M\cong J$ for some ideal $J$ of $R$, then $M$ is strongly self-Rickart \cite[Corollary~3.8]{CO1}, and thus, it is strongly self-CS-Rickart. But $M$ is also weak duo by the equivalence (b)$\Leftrightarrow$(c), and thus $M$ is strongly self-CS-Rickart.

(2) Assume first $M$ is dual strongly self-CS-Rickart. Then $M$ is weak duo by Corollary \ref{st00}, and so $M\cong J$ for some ideal $J$ of $R$ or $M\cong \bigoplus_{i=1}^kR/P_i^{n_i}$ for some distinct maximal ideals $P_1,\dots,P_k$ of $R$ and positive integers $n_1,\dots,n_k$ \cite[Corollary~3.11]{OHS}. Since $M\cong J$ is uniform, it is indecomposable, hence it cannot be dual self-CS-Rickart by \cite[Proposition~4.15]{Tribak}. Then it is not dual strongly self-CS-Rickart by Corollary \ref{st01}. Hence $M$ has the required form.

Conversely, assume that $M\cong \bigoplus_{i=1}^kR/P_i^{n_i}$ for some distinct maximal ideals $P_1,\dots,P_k$ of $R$ and positive integers $n_1,\dots,n_k$. Then $M$ is lifting \cite[Propositions~A.7,A.8]{MM}, hence $M$ is dual self-CS-Rickart. But $M$ is also weak duo by \cite[Corollary~3.11]{OHS}, and thus $M$ is dual strongly self-CS-Rickart. 

(iii) Every injective module $M$ over a Dedekind domain $R$ is of the form 
$M=\bigoplus_{i\in I}M_i$, where $M_i$ is either a vector space over $K$ or $E(R/P_i)$ for some prime ideal $P_i$ of $R$ \cite[Theorem~7]{K52}. 

(1)$\Rightarrow$(3) and (2)$\Rightarrow$(3) If $M$ is (dual) strongly self-CS-Rickart, then $M$ is weak duo by Corollary \ref{st00}, and thus $M$ cannot contain copies of the same $M_i$ \cite[Theorem~2.7]{OHS}. Since ${\rm Hom}_R(E(R),E(R/P))\neq 0$ and $M$ is weak duo, $M$ cannot contain both $K$ and $E(R/P)$ for some prime ideal $P$ of $R$ \cite[Theorem~2.7]{OHS}. Since each such $E(R/P)$ is torsion weak duo,  
$P$ must be a maximal ideal of $R$ by (i).
Note that ${\rm Hom}_R(E(R/P),E(R/Q))=0$ for every distinct maximal ideals $P$ and $Q$ of $R$ \cite[Proposition~4.21]{SV}. Hence $M\cong K$ or $M\cong \bigoplus_{i\in I} E(R/P_i)$ for some distinct maximal ideals $P_i$ of $R$.

(3)$\Rightarrow$(1) and (3)$\Rightarrow$(2) 
If $M\cong K$, then $M$ is clearly (dual) self-CS-Rickart. Also, if $M\cong \bigoplus_{i\in I} E(R/P_i)$ for some distinct maximal ideals $P_i$ of $R$, then $M$ is (dual) self-CS-Rickart by (i). 
\end{proof}

\begin{coll} \label{c1:abgr}
Let $G$ be a non-zero abelian group. 

\begin{enumerate}[(i)] 
\item Assume that $G$ is torsion. The following are equivalent: 
\begin{enumerate}[(1)]
\item $G$ is strongly self-CS-Rickart.
\item $G$ is dual strongly self-CS-Rickart.
\item $G$ is weak duo. 
\item $G\cong \bigoplus_{i\in I}G_i$, where for each $i\in I$, either $G_i\cong \mathbb{Z}_{p_i^{\infty}}$ or $G_i\cong \mathbb{Z}_{p_i^{n_i}}$ for some distinct primes $p_i$ and positive integers $n_i$.
\end{enumerate}
 
\item Assume that $G$ is finitely generated. 
\begin{enumerate}[(1)] 
\item The following are equivalent:
\begin{enumerate}[(a)]
\item $G$ is strongly self-CS-Rickart.
\item $G$ is weak duo.
\item $G\cong n\mathbb{Z}$ for some positive integer $n$ or $G\cong \bigoplus_{i=1}^k \mathbb{Z}_{p_i^{n_i}}$ for some distinct primes $p_i$ and positive integers $n_i$.
\end{enumerate}
\item The following are equivalent:
\begin{enumerate}[(a)]
\item $G$ is dual strongly self-CS-Rickart.
\item $G\cong \bigoplus_{i=1}^k \mathbb{Z}_{p_i^{n_i}}$ for some distinct primes $p_i$ and positive integers $n_1,\dots,n_k$.
\end{enumerate}
\end{enumerate}

\item Assume that $G$ is injective. Then the following are equivalent:
\begin{enumerate}[(1)]
\item $G$ is strongly self-CS-Rickart.
\item $G$ is dual strongly self-CS-Rickart.
\item $G\cong \mathbb{Q}$ or $G\cong \bigoplus_{i\in I} \mathbb{Z}_{p_i^{\infty}}$ for some primes $p_i$. 
\end{enumerate}
\end{enumerate}
\end{coll}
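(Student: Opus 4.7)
The plan is to obtain Corollary~\ref{c1:abgr} as an immediate specialization of Corollary~\ref{c1:dede} to the Dedekind domain $R=\mathbb{Z}$. The strategy is to translate the module-theoretic data in Corollary~\ref{c1:dede} into the classical language of abelian groups via the standard dictionary for $\mathbb{Z}$.

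The key identifications I would record before invoking the three parts of Corollary~\ref{c1:dede} are: (a) the maximal ideals of $\mathbb{Z}$ are exactly $P = p\mathbb{Z}$ for a prime $p$, so distinct maximal ideals correspond to distinct primes; (b) for a maximal ideal $P=p\mathbb{Z}$ and a positive integer $n$, $R/P^n = \mathbb{Z}/p^n\mathbb{Z} \cong \mathbb{Z}_{p^n}$; (c) the injective hull $E(R/P) = E(\mathbb{Z}/p\mathbb{Z})$ is the Prüfer group $\mathbb{Z}_{p^{\infty}}$; (d) the quotient field is $K=\mathbb{Q}$; and (e) every non-zero ideal $J$ of $\mathbb{Z}$ is of the form $n\mathbb{Z}$ for some positive integer $n$, and $J\cong \mathbb{Z}\cong n\mathbb{Z}$ as $\mathbb{Z}$-modules when $n\neq 0$. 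With this dictionary, the lists of modules appearing in parts~(i), (ii), (iii) of Corollary~\ref{c1:dede} translate verbatim to the lists in parts~(i), (ii), (iii) of the present corollary.

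Concretely, I would argue each part as follows. For (i), apply Corollary~\ref{c1:dede}(i) to the torsion abelian group $G$: the decomposition $M \cong \bigoplus_{i\in I} M_i$ with $M_i\cong E(R/P_i)$ or $R/P_i^{n_i}$ becomes the claimed decomposition with summands $\mathbb{Z}_{p_i^{\infty}}$ or $\mathbb{Z}_{p_i^{n_i}}$. For (ii.1), apply Corollary~\ref{c1:dede}(ii.1): an ideal $J$ of $\mathbb{Z}$ gives either $G=0$ (excluded) or $G\cong n\mathbb{Z}$ for some positive integer $n$, while the torsion summand yields $\bigoplus_{i=1}^k \mathbb{Z}_{p_i^{n_i}}$. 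For (ii.2), use Corollary~\ref{c1:dede}(ii.2) directly, which already excludes the free case. For (iii), apply Corollary~\ref{c1:dede}(iii): the vector space $K$ becomes $\mathbb{Q}$, and the summands $E(R/P_i)$ become Prüfer groups $\mathbb{Z}_{p_i^{\infty}}$.

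There is no real obstacle; the only point requiring care is that in (ii.1) the case $M\cong J$ of Corollary~\ref{c1:dede} covers both the zero ideal and proper ideals of $\mathbb{Z}$, but the hypothesis that $G$ is non-zero together with the principal ideal structure of $\mathbb{Z}$ forces $J=n\mathbb{Z}$ for some positive integer $n$, as stated. Since the proof is a direct application, I would simply write: \emph{This follows from Corollary~\ref{c1:dede} applied to the Dedekind domain $R=\mathbb{Z}$, using the identifications $\mathbb{Z}/p^n\mathbb{Z}\cong \mathbb{Z}_{p^n}$, $E(\mathbb{Z}/p\mathbb{Z})\cong \mathbb{Z}_{p^{\infty}}$, $\mathbb{Q}\cong K$, and the fact that every non-zero ideal of $\mathbb{Z}$ is of the form $n\mathbb{Z}$ for some positive integer $n$.}
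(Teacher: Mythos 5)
Your proposal is correct and matches the paper's (implicit) intent exactly: the paper gives no separate proof of Corollary~\ref{c1:abgr}, treating it as the immediate specialization of Corollary~\ref{c1:dede} to $R=\mathbb{Z}$ via precisely the dictionary you describe ($P=p\mathbb{Z}$, $R/P^n\cong\mathbb{Z}_{p^n}$, $E(R/P)\cong\mathbb{Z}_{p^{\infty}}$, $K=\mathbb{Q}$, nonzero ideals $\cong n\mathbb{Z}$). Nothing further is needed.
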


\begin{ex} \label{e1:abgr} \rm (i) The abelian group $\mathbb{Z}_p\oplus \mathbb{Z}_p$ (for some prime $p$) is both self-CS-Rickart and dual self-CS-Rickart, being semisimple. But it is neither strongly self-CS-Rickart, nor dual strongly self-CS-Rickart by Corollary~\ref{c1:abgr}.

(ii) The abelian group $\mathbb{Z}_p\oplus \mathbb{Q}$ (for some prime $p$) is strongly self-CS-Rickart and dual strongly self-CS-Rickart by Corollary~\ref{c1:abgr}, but neither strongly extending \cite[Example~2.9]{EKD}, nor strongly lifting \cite[Example~3.9]{Wang}.
\end{ex}

\section{Classes all of whose objects are (dual) strongly CS-Rickart}

We may use the above results to obtain characterizations of some classes all of whose objects are (dual) strongly self-CS-Rickart. Following the corresponding module-theoretic notions, an object $M$ of an abelian category $\mathcal{A}$ is called \emph{(dual) square-free} if whenever a subobject (factor object) of $M$ is isomorphic to $N\oplus N$ for some object $N$ of $\mathcal{A}$, one has $N=0$ (e.g., see \cite{KKKS}). Note that a semisimple object of $\mathcal{A}$ is weak duo if and only if it is (dual) square-free.

\begin{theo} \label{t:extlif} Let $\A$ be an abelian category. 
\begin{enumerate}
\item Assume that $\A$ has enough injectives. Let $\mathcal{C}$ be a class of objects of $\A$ which is closed under binary direct sums and contains all injective objects of $\A$. Consider the following conditions:
\begin{enumerate}[(i)]
\item Every object of $\C$ is strongly extending.
\item Every object of $\C$ is strongly self-CS-Rickart.
\item Every object of $\C$ is weak duo injective. 
\end{enumerate}
Then (i)$\Leftrightarrow$(ii)$\Rightarrow$(iii). If every object of $\A$ has an injective envelope, then all three conditions are equivalent.
\item Assume that $\A$ has enough projectives. Let $\mathcal{C}$ be a class of objects of $\A$
which is closed under binary direct sums and contains all projective objects of $\A$. Consider the following conditions:
\begin{enumerate}[(i)]
\item Every object of $\C$ is strongly lifting.
\item Every object of $\C$ is dual strongly self-CS-Rickart.
\item Every object of $\C$ is weak duo projective.
\end{enumerate}
Then (i)$\Leftrightarrow$(ii)$\Rightarrow$(iii). If every object of $\A$ has a projective cover (i.e., $\mathcal{A}$ is perfect), then all three conditions are equivalent.
\end{enumerate} 
\end{theo}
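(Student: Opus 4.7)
The plan is to prove part (1); part (2) then follows by the duality principle.

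For (i)$\Rightarrow$(ii), if $M \in \C$ is strongly extending then for every $f \in \End_{\A}(M)$ the subobject ${\rm Ker}(f)$ of $M$ is essential in a fully invariant direct summand of $M$, so $M$ is strongly self-CS-Rickart.

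For (ii)$\Rightarrow$(i), fix $M \in \C$ and a subobject $N$ of $M$; I would produce a fully invariant direct summand of $M$ in which $N$ is essential. Since $\A$ has enough injectives, choose a monomorphism $M/N \to E$ with $E$ injective and let $f:M\to E$ be the composite $M \to M/N \to E$, so ${\rm Ker}(f)=N$. Because $E \in \C$ and $\C$ is closed under binary direct sums, $M \oplus E \in \C$ is strongly self-CS-Rickart. Applying this to the morphism $g:M \oplus E \to M \oplus E$ given by $g(m,e)=(0,f(m))$, whose kernel is $N \oplus E$, one obtains a fully invariant direct summand $U$ of $M \oplus E$ in which $N \oplus E$ is essential.

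The next step is to transfer this data to $M$. The summand $0 \oplus E$ is a direct summand of $M \oplus E$ contained in $U$, so it is a direct summand of $U$, and the modular law in the subobject lattice gives $U = (U \cap M) \oplus (0 \oplus E)$. From this one verifies in turn: that $U \cap M$ is a direct summand of $M$ (using that $U$ is a direct summand of $M \oplus E$); that $N$ is essential in $U \cap M$ (by restricting essentiality of $N \oplus E$ in $U$ to the $M$-component); and that $U \cap M$ is fully invariant in $M$, by extending each $\phi \in \End_{\A}(M)$ to $\Phi:M \oplus E \to M \oplus E$, $\Phi(m,e)=(\phi(m),e)$, and invoking full invariance of $U$. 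Hence $M$ is strongly extending.

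For (ii)$\Rightarrow$(iii), weak duo follows from Corollary~\ref{st00}. For injectivity of $M \in \C$, embed $M$ via $\iota$ into an injective object $I$ of $\A$; then $I \in \C$ and $M \oplus I \in \C$ is weak duo. The endomorphism $\Phi(m,i)=(0,\iota(m))$ must preserve the fully invariant direct summand $M \oplus 0$, forcing $\iota(m)=0$ for every $m$; hence $M=0$, which is trivially injective. Now assume that every object of $\A$ has an injective envelope, and let $M \in \C$, $N \subseteq M$. Form the injective envelope $E(N)$ in $\A$. Since $M$ is injective by (iii), the inclusion $N \to M$ extends to a morphism $j:E(N) \to M$, which is a monomorphism because ${\rm Ker}(j) \cap N = 0$ and $N$ is essential in $E(N)$. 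As $E(N)$ is injective, $j$ splits, so $E(N)$ is a direct summand of $M$; by weak duo it is fully invariant; and $N$ is essential in $E(N)$. Thus $M$ is strongly extending, giving (iii)$\Rightarrow$(i).

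The main obstacle is the descent step in (ii)$\Rightarrow$(i): one must argue, purely in the subobject lattice, that a fully invariant direct summand $U$ of $M \oplus E$ containing $N \oplus E$ essentially descends to a fully invariant direct summand $U \cap M$ of $M$ containing $N$ essentially. Once that descent is carried out carefully in the abelian-categorical framework, the remaining implications reduce to routine applications of Corollary~\ref{st00} and the universal properties of injective envelopes.
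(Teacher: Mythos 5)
Your argument is correct as a chain of implications, but it departs from the paper's proof at two points, and the second departure exposes something you should report rather than pass over. For (ii)$\Rightarrow$(i) the paper does not redo the descent: it quotes \cite[Theorem~5.1]{CR1} for the equivalence of ``extending'' and ``self-CS-Rickart'' on such a class $\C$, and upgrades both sides to their strong versions via Corollary~\ref{st00} together with the remark that ``strongly extending'' means ``extending and weak duo''. Your construction of $g(m,e)=(0,f(m))$ on $M\oplus E$ and the descent of the fully invariant summand $U$ to $U\cap M$ (modular law, restriction of essentiality, extension of $\phi$ to $\phi\oplus 1_E$) is exactly the content of that citation redone in the fully invariant setting, and each step checks out. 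Likewise your (iii)$\Rightarrow$(i) via the injective envelope $E(N)$ splitting off inside $M$ matches the paper's appeal to \cite[Corollary~5.2]{CR1}.

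The genuine divergence is (ii)$\Rightarrow$(iii). The paper obtains injectivity of $M$ constructively: weak duo gives SIP by \cite[Corollary~2.2]{OHS}, SIP of $E_1\oplus E_2$ makes $E_2$ relatively Rickart over $E_1$ by \cite[Proposition~3.7]{CK}, and then $M=\mathrm{Ker}(i_2p_1)$ is a direct summand of the injective $E_1$. Your argument instead concludes $M=0$: if $M\oplus I$ is weak duo then the summand $M\oplus 0$ must be invariant under $(m,i)\mapsto(0,\iota(m))$, which forces $\iota=0$ and hence $M=0$. The deduction is sound, and $0$ is weak duo injective, so the implication is formally proved --- but what you have really shown is that conditions (i) and (ii) can only hold when every object of $\C$ is zero, hence (since $\C$ contains all injectives and $\A$ has enough of them) only when $\A$ is trivial. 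This is consistent with Theorem~\ref{t:pstr4}(1) applied to $M\oplus M\in\C$, which demands $\Hom_{\A}(M,M)=0$, and with Example~\ref{ec1}(i). So the theorem as stated is vacuous for nontrivial $\A$, and the same collapse infects the corollary on square-free semisimple rings. You should surface this observation explicitly as a problem with the hypotheses on $\C$ (closure under binary direct sums combined with containing all injectives is incompatible with every object being weak duo), rather than letting ``hence $M=0$, which is trivially injective'' stand as an innocuous step in the middle of the proof.
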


\begin{proof} (1) (i)$\Leftrightarrow$(ii) This follows by Corollary \ref{st00} and \cite[Theorem~5.1]{CR1}.

(ii)$\Rightarrow$(iii) Assume that every object of $\C$ is strongly self-CS-Rickart. Then every object of $\C$ is weak duo by Corollary \ref{st00}, which implies that every object of $\C$ has SIP by \cite[Corollary~2.2]{OHS}, whose proof works in abelian categories. Now let $M$ be an object of $\mathcal{C}$. Consider some monomorphisms $i_1:M\to E_1$ and $i_2:E_1/M\to E_2$ for some injective objects $E_1$, $E_2$ of $\mathcal{A}$, and the the natural epimorphism $p_1:E_1\to E_1/M$. 
Since $E_1\oplus E_2\in \C$, it has SIP. Then $E_2$ is $E_1$-Rickart by \cite[Proposition~3.7]{CK}. Hence for the morphism $f=i_2p_1:E_1\to E_2$, $M={\rm Ker}(f)$ is a direct summand of $E_1$. Hence $M$ is weak duo injective.

(iii)$\Rightarrow$(i) Assume that every object of $\C$ is weak duo injective. Since every object of $\A$ has an injective envelope, every injective object is extending by \cite[Corollary~5.2]{CR1}. Also, the proof of Corollary \ref{st00} may be easily adapted in order to deduce that an object of an abelian category is strongly extending if and only if it is weak duo extending. Hence every object of $\C$ is strongly extending.  
\end{proof}

\begin{coll} \label{c:extlifgen} Let $\A$ be an abelian category. 
\begin{enumerate}
\item Assume that $\A$ has enough injectives. Consider the following conditions:
\begin{enumerate}[(i)]
\item Every (injective) object of $\A$ is strongly extending.
\item Every (injective) object of $\A$ is strongly self-CS-Rickart.
\item Every (injective) object of $\A$ is weak duo injective. 
\end{enumerate}
Then (i)$\Leftrightarrow$(ii)$\Rightarrow$(iii). 
If every object of $\A$ has an injective envelope, then all three conditions are equivalent.
\item Assume that $\A$ has enough projectives. Consider  the following conditions:
\begin{enumerate}[(i)]
\item Every (projective) object of $\A$ is strongly lifting.
\item Every (projective) object of $\A$ is dual strongly self-CS-Rickart.
\item Every (projective) object of $\A$ is weak duo projective.
\end{enumerate}
Then (i)$\Leftrightarrow$(ii)$\Rightarrow$(iii).
If every object of $\A$ has a projective cover (i.e., $\mathcal{A}$ is perfect), then all three conditions are equivalent.
\end{enumerate} 
\end{coll}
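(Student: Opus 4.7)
The plan is to derive Corollary \ref{c:extlifgen} as a direct consequence of Theorem \ref{t:extlif} by choosing two natural classes $\C$ of objects of $\A$ to which the theorem applies. Since the two parts of the corollary are dual, I will only address (1), and part (2) will follow by the duality principle in abelian categories.

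First I would handle the ``every object'' reading. Let $\C$ consist of all objects of $\A$. This class is trivially closed under binary direct sums and contains all injective objects of $\A$, so Theorem \ref{t:extlif}(1) applies and immediately yields (i)$\Leftrightarrow$(ii)$\Rightarrow$(iii), together with the equivalence of all three conditions when every object of $\A$ admits an injective envelope.

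Next I would handle the ``every injective object'' reading by letting $\C$ be the class of injective objects of $\A$. The only nontrivial point is that $\C$ is closed under binary direct sums, which holds since a finite direct sum of injective objects in an abelian category is injective (a standard consequence of the lifting property applied componentwise). Of course $\C$ contains all injective objects of $\A$, so Theorem \ref{t:extlif}(1) applies again and gives the desired implications (i)$\Leftrightarrow$(ii)$\Rightarrow$(iii) for injective objects, plus the equivalence of all three when every object of $\A$ has an injective envelope.

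There is no real obstacle: the content has already been carried out in Theorem \ref{t:extlif}, and the corollary only consists in recording its two most natural specializations. Part (2) is obtained by dualizing the above argument: one takes $\C$ to be either all objects, or all projective objects, of $\A$ (observing that binary direct sums of projectives are projective), and applies Theorem \ref{t:extlif}(2), with the final equivalence in the perfect case coming from the assumption that every object of $\A$ has a projective cover.
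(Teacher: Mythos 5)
Your proposal is correct and matches the paper's (implicit) argument: the corollary is stated without proof precisely because it is the specialization of Theorem \ref{t:extlif} to the class of all objects and to the class of injective (respectively projective) objects, both of which are closed under binary direct sums and contain all injectives (respectively projectives). Your verification of the hypotheses for these two choices of $\mathcal{C}$, and the appeal to duality for part (2), is exactly what is needed.
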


\begin{coll} The following are equivalent for a unitary ring $R$:
\begin{enumerate}[\indent (i)]
\item $R$ is square-free semisimple. 
\item Every right $R$-module is square-free semisimple.
\item Every right $R$-module is weak duo injective.
\item Every right $R$-module is strongly extending.
\item Every right $R$-module is strongly self-CS-Rickart.
\item Every right $R$-module is weak duo projective.
\item Every right $R$-module is strongly lifting.
\item Every right $R$-module is dual strongly self-CS-Rickart.
\end{enumerate}
\end{coll}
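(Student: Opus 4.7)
The plan is to feed the abelian category $\mathcal{A}=\mathrm{Mod}\text{-}R$ into the preceding Corollary \ref{c:extlifgen}. Since module categories have enough injectives and every module admits an injective envelope, part (1) of that corollary yields (iii)$\Leftrightarrow$(iv)$\Leftrightarrow$(v) at once. For the dual block (vi)$\Leftrightarrow$(vii)$\Leftrightarrow$(viii) I would apply part (2), which demands that $R$ be right perfect. This I would extract from the conditions themselves: each of (vi), (vii), (viii) in particular forces every right $R$-module to be projective (directly in (vi), and via the dual of the argument used for the analogous step in Theorem \ref{t:extlif} for (vii) and (viii)), whence $R$ is semisimple artinian and a fortiori perfect, so Corollary \ref{c:extlifgen}(2) closes the second triangle.

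Next I would splice (i) and (ii) into this picture. The implication (ii)$\Rightarrow$(i) is immediate since $R$ is itself a right $R$-module. For (i)$\Rightarrow$(iii), I would start from $R$ being square-free semisimple and note that every right $R$-module is then semisimple (hence injective); the remaining content is that every module is weak duo, which I would read off from the block decomposition $R\cong D_1\times\cdots\times D_n$ of $R$ into pairwise non-isomorphic division rings, combined with the vanishing of Hom-spaces between distinct isotypic components. Conversely, either of (iii) or (vi) forces $R$ to be semisimple (every module being respectively injective or projective), and applying the weak duo part of the condition to $R_R$ yields the square-freeness of $R$, so (i) holds. Finally, (i)$\Rightarrow$(ii) follows because once $R$ is square-free semisimple, every module is semisimple, and the weak duo structure propagated to arbitrary modules rules out any repeated isotypic component, so every module is again square-free semisimple.

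The step I expect to be the main obstacle is the propagation of the weak duo property from $R_R$ to arbitrary right $R$-modules in (i)$\Rightarrow$(iii). The square-freeness of $R$ must be upgraded from a statement about the regular module to a global rigidity property of $\mathrm{Mod}\text{-}R$, and one has to be careful that direct summands in arbitrary modules remain fully invariant once the isotypic decomposition is unfolded. Once this rigidity is in place, Corollary \ref{st00} together with the preceding reduction results (Proposition \ref{st1} in particular) handle the remaining bookkeeping mechanically, and the cycle of implications linking (i)--(viii) closes.
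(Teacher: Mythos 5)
Your reduction of the two triangles (iii)--(v) and (vi)--(viii) to Corollary \ref{c:extlifgen} is the same route the paper takes, and the bootstrap by which you extract perfectness of $R$ from each of (vi), (vii), (viii) before invoking part (2) of that corollary is correct (and is a point the paper leaves implicit). The implications (ii)$\Rightarrow$(i), (iii)$\Rightarrow$(i) and (vi)$\Rightarrow$(i) are also fine.

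The genuine gap is exactly the step you flagged, namely (i)$\Rightarrow$(iii) (and likewise (i)$\Rightarrow$(ii)), and it cannot be closed by the mechanism you propose. If $R$ is square-free semisimple, say $R\cong D_1\times\cdots\times D_k$ with simple modules $S_1,\dots,S_k$, then the vanishing of $\Hom_R(S_i,S_j)$ for $i\neq j$ only controls maps between distinct isotypic components; the obstruction lives inside a single component. The module $S_1\oplus S_1$ is a perfectly good right $R$-module, it visibly contains a square, and its summand $S_1\oplus 0$ is not fully invariant (the coordinate swap moves it), so $S_1\oplus S_1$ is neither square-free nor weak duo; by Corollary \ref{st00} it is not strongly self-CS-Rickart, and it is not strongly extending either. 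Thus for $R$ a field, condition (i) holds while your target conditions fail for this module, so no amount of care with the isotypic decomposition will propagate the weak duo property from $R_R$ to arbitrary modules. Note that the paper's own proof does not argue this step directly: it delegates the link between (i), (ii) and the remaining conditions to \cite[Lemma~5]{AN1} together with the standard characterizations of semisimplicity, so it offers no repair for the step you are stuck on. The difficulty you ran into is therefore not a defect of your technique but an indication that, with ``every right $R$-module'' read literally, the implication from (i) to the universally quantified conditions does not go through; you should record the counterexample $S\oplus S$ rather than attempt to patch the propagation argument.
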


\begin{proof} This follows by Corollary \ref{c:extlifgen}, \cite[Lemma~5]{AN1} and the well-known equivalences: $R$ is semisimple if and only if every right $R$-module is semisimple if and only if every right $R$-module is injective if and only if every right $R$-module is projective.
\end{proof}

Recall that a Grothendieck category $\mathcal{A}$ is called a \emph{$V$-category} if every simple object is injective \cite{DNV}, and a \emph{regular} category if every object $B$ of $\mathcal{A}$ is regular in the sense that every short exact sequence of the form $0\to A\to B\to C\to 0$ is pure in $\mathcal{A}$ \cite[p.~313]{Wis}.

\begin{theo} \label{t:reg} Let $\A$ be a locally finitely generated Grothendieck category. 
\begin{enumerate}
\item The following are equivalent:
\begin{enumerate}[(i)]
\item Every finitely cogenerated object of $\A$ is weak duo semisimple. 
\item Every finitely cogenerated object of $\A$ is strongly self-CS-Rickart.
\item Every finitely cogenerated object of $\A$ is weak duo and every finitely cogenerated injective object of $\A$ is strongly self-CS-Rickart.
\end{enumerate}
\item The following are equivalent:
\begin{enumerate}[(i)]
\item Every finitely generated object of $\A$ is weak duo regular. 
\item Every finitely generated object of $\A$ is dual strongly self-CS-Rickart.
\item Every finitely generated object of $\A$ is weak duo and every finitely generated projective object of $\A$ is dual strongly self-CS-Rickart.
\end{enumerate}
\end{enumerate}
\end{theo}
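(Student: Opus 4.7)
By the duality principle, I treat only part (1); part (2) follows by the dual argument on finitely generated objects and projective covers (replacing injective envelopes by projective covers and the socle by the analogous ``fine'' structure at the top). The strategy combines Corollary~\ref{st00} (``strongly self-CS-Rickart $=$ self-CS-Rickart plus weak duo'') with a short observation on $S\oplus S$ for a simple $S$, which shows that the weak duo condition on \emph{every} finitely cogenerated object is highly restrictive.

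For (i)$\Rightarrow$(ii): a semisimple object is self-CS-Rickart (each kernel is a direct summand and hence essential in itself); combined with the weak duo hypothesis, Corollary~\ref{st00} promotes this to strongly self-CS-Rickart. For (ii)$\Rightarrow$(iii): Corollary~\ref{st00} gives that every strongly self-CS-Rickart object is weak duo, which is the first part of (iii); the second part is a special case of (ii).

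The substantive implication is (iii)$\Rightarrow$(i), and I would carry it out as follows. By (iii), every finitely cogenerated object of $\A$ is weak duo. Suppose $S$ is a non-zero simple object of $\A$; then $S\oplus S$ is semisimple and finitely generated with $\mathrm{Soc}(S\oplus S)=S\oplus S$ essential, so it is finitely cogenerated, and hence weak duo by (iii). But the endomorphism $h\colon(x,y)\mapsto(0,x)$ of $S\oplus S$ sends the direct summand $S\oplus 0$ into $0\oplus S$, which is not contained in $S\oplus 0$; so the canonical section $i_1\colon S\to S\oplus S$ onto the first factor is not fully invariant, contradicting weak duo. Hence $\A$ has no non-zero simple object, and since in a locally finitely generated Grothendieck category every non-zero finitely cogenerated object has a non-zero essential finitely generated semisimple socle, which must contain a simple subobject, $\A$ has no non-zero finitely cogenerated object either, so (i) holds vacuously. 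The main obstacle is exactly recognizing that the weak duo hypothesis on all finitely cogenerated objects already forces (i) in this vacuous way; once that is noticed, the contradiction with $(x,y)\mapsto(0,x)$ on $S\oplus S$ is immediate.
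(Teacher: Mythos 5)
Your treatment of (i)$\Rightarrow$(ii) and (ii)$\Rightarrow$(iii) coincides with the paper's (semisimple objects are self-CS-Rickart, then Corollary~\ref{st00}). For the substantive implication (iii)$\Rightarrow$(i) you take a genuinely different route. The paper argues that under (iii) every finitely cogenerated injective object has SIP by \cite[Corollary~2.2]{OHS}, hence $\A$ is a $V$-category by \cite[Proposition~1.8]{Garcia}, hence every finitely cogenerated object is semisimple by \cite[23.1]{Wis}, and then adds the weak duo hypothesis. You instead observe that the weak duo hypothesis alone already excludes non-zero simple objects, since for a simple $S$ the object $S\oplus S$ is finitely cogenerated while the section $i_1\colon S\to S\oplus S$ is not fully invariant (witness $h=i_2p_1$), and you conclude that there are then no non-zero finitely cogenerated objects at all, so that (i) holds vacuously. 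Your argument is sound: $S\oplus S$ is indeed finitely cogenerated and not weak duo (the paper itself records the instance $\mathbb{Z}_p\oplus\mathbb{Z}_p$ in Example~\ref{e1:abgr}), and a non-zero finitely cogenerated object does contain a simple subobject --- this last point follows already from Zorn's lemma applied to the poset of non-zero subobjects under reverse inclusion, since finite cogeneration forces chains of non-zero subobjects to have non-zero intersection, so you do not even need the full ``essential finitely generated socle'' characterization you invoke.

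What your route buys, besides being more elementary, is that it exposes the statement as degenerate: a non-zero locally finitely generated Grothendieck category always has a non-zero finitely generated object, hence a maximal proper subobject and a simple quotient, so each of (i), (ii), (iii) can hold only when $\A=0$. (Note that condition (i) is itself never satisfiable in a non-trivial category, for the same $S\oplus S$ reason, so both your proof and the paper's prove a true but contentless equivalence; the paper's detour through $V$-categories hides this.) For part (2) your appeal to duality and ``projective covers'' is vaguer than necessary: no dualization is needed, since $S\oplus S$ is also finitely generated and every non-zero finitely generated object has a simple quotient, so the same vacuity argument applies verbatim. I would flag the degeneracy explicitly rather than present the vacuous reading as the intended content of the theorem, but as a proof of the stated equivalences your argument is correct.
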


\begin{proof} (1) (i)$\Rightarrow$(ii) If (i) holds, then every finitely cogenerated object of $\A$ is semisimple, and thus clearly self-CS-Rickart. But every finitely cogenerated object of $\A$ is also weak duo by hypothesis. Now the conclusion follows by Corollary \ref{st00}.

(ii)$\Rightarrow$(iii) This is obvious.

(iii)$\Rightarrow$(i) If (iii) holds, then every finitely cogenerated injective object of $\A$ has SIP by \cite[Corollary~2.2]{OHS}, whence we deduce that $\A$ is a $V$-category by \cite[Proposition~1.8]{Garcia}, both cited results having proofs which are valid in our setting. Then every finitely cogenerated object of $\A$ is semisimple \cite[23.1]{Wis} (also see \cite[Theorem~2.3]{DNV}). But every finitely cogenerated object of $\A$ is also weak duo by hypothesis.

(2) (i)$\Rightarrow$(ii) If (i) holds, then $\A$ is a regular category. Hence every finitely generated object of $\A$ is dual self-Rickart \cite[Theorem~4.4]{CK}, and thus dual self-CS-Rickart. But every finitely generated object of $\A$ is weak duo by hypothesis. Now the conclusion follows by Corollary \ref{st00}.

(ii)$\Rightarrow$(iii) This is obvious.

(iii)$\Rightarrow$(i) If (iii) holds, then every finitely generated projective object of $\A$ has SSP by \cite[Corollary~2.2]{OHS}, whence we deduce that $\A$ is a regular category by \cite[Proposition~1.8]{Garcia}, because the proofs of the cited results work in our setting. Then every finitely generated object of $\A$ is regular. But every finitely generated object of $\A$ is also weak duo by hypothesis.
\end{proof}

Recall that an abelian category $\A$ is called \emph{(semi)hereditary} if every (finitely generated) subobject of a projective object is projective, and \emph{(semi)cohereditary} if every (finitely cogenerated) factor object of an injective object is injective.

\begin{theo} \label{t:her} Let $\A$ be a locally finitely generated Grothendieck category. 
\begin{enumerate}
\item The following are equivalent:
\begin{enumerate}[(i)]
\item Every (finitely generated) subobject of a projective object of $\A$ is weak duo projective. 
\item Every (finitely generated) projective object of $\A$ is weak duo and every (finitely generated) projective object of $\A$ is strongly self-CS-Rickart.
\end{enumerate}
\item The following are equivalent:
\begin{enumerate}[(i)]
\item Every (finitely cogenerated) factor object of an injective object of $\A$ is weak duo injective. 
\item Every (finitely cogenerated) injective object of $\A$ is weak duo and every (finitely cogenerated) injective object of $\A$ is dual strongly self-CS-Rickart.
\end{enumerate}
\end{enumerate}
\end{theo}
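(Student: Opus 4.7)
The plan is to prove the equivalence $(i) \Leftrightarrow (ii)$ in part (1); part (2) will then follow by the duality principle in the abelian category setting.

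For $(i) \Rightarrow (ii)$, I would first note that every (finitely generated) projective $P$ is trivially a (finitely generated) subobject of itself, so by $(i)$ the object $P$ is weak duo projective, and in particular weak duo. To establish the strong self-CS-Rickart property of $P$, I take an arbitrary endomorphism $f:P\to P$ and observe that ${\rm Im}(f)$ is (finitely generated, being a quotient of $P$; and) a subobject of $P$, hence projective by $(i)$. The short exact sequence $0\to {\rm Ker}(f)\to P\to {\rm Im}(f)\to 0$ therefore splits, making ${\rm Ker}(f)$ a direct summand of $P$. Thus $P$ is self-Rickart, a fortiori self-CS-Rickart, and combined with weak duo, Corollary~\ref{st00} yields that $P$ is strongly self-CS-Rickart.

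For $(ii)\Rightarrow (i)$, let $N$ be a (finitely generated) subobject of a projective $P$, with the aim of showing that $N$ is weak duo projective. The plan is to first establish that $\A$ is (semi)hereditary, so that $N$ is automatically projective; then, because $N$ is a (finitely generated) projective, hypothesis $(ii)$ gives that $N$ is weak duo, delivering $(i)$. To obtain (semi)hereditariness, I intend to mirror the strategy from the proof of Theorem~\ref{t:reg}: by $(ii)$, every (finitely generated) projective is weak duo, hence has SIP via the standard argument of \cite[Corollary~2.2]{OHS}; a projective analog of \cite[Proposition~1.8]{Garcia}---playing here the role that the SIP-to-$V$-category implication played for injectives---should then yield the desired (semi)hereditariness of $\A$.

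I expect the main obstacle to be this last step, namely establishing the projective analog of \cite[Proposition~1.8]{Garcia} in the locally finitely generated Grothendieck category setting: verifying that SIP on every (finitely generated) projective forces $\A$ to be (semi)hereditary is the piece that does not follow immediately from the formalism already set up in the paper, and will require a careful check that the cited results from \cite{OHS,Garcia} have projective versions valid in this generality. Once this ingredient is in place, the rest of the argument assembles routinely as above, and part (2) follows by the dual argument---replacing subobjects by factor objects, projectives by injectives, and the strong self-CS-Rickart condition by its dual.
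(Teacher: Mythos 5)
Your proposal is correct and follows essentially the same route as the paper: (i)$\Rightarrow$(ii) via weak duo plus self-Rickart (your splitting argument for ${\rm Ker}(f)$ is just an unpacking of the citation to \cite[Theorem~4.7]{CK} used in the paper), and (ii)$\Rightarrow$(i) via weak duo $\Rightarrow$ SIP $\Rightarrow$ (semi)hereditary. The ``main obstacle'' you flag is handled in the paper simply by citing \cite[Proposition~1.8]{Garcia} (whose module-theoretic statement already covers the SIP-on-projectives case) together with the assertion that its proof carries over to locally finitely generated Grothendieck categories, exactly as in the proof of Theorem~\ref{t:reg}.
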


\begin{proof} (1) (i)$\Rightarrow$(ii) If (i) holds, then $\A$ is a (semi)hereditary category. Hence every (finitely generated) projective object of $\A$ is self-Rickart \cite[Theorem~4.7]{CK}, and thus self-CS-Rickart. Also, every (finitely generated) projective object of $\A$ is weak duo by hypothesis. Now the conclusion follows by Corollary \ref{st00}.

(ii)$\Rightarrow$(i) If (ii) holds, then every (finitely generated) projective object of $\A$ has SIP by \cite[Corollary~2.2]{OHS}, whence we deduce that $\A$ is a (semi)hereditary category by \cite[Proposition~1.8]{Garcia}, both cited results having proofs which are valid in our setting. Then every (finitely generated) subobject of a projective object of $\A$ is projective. But every (finitely generated) projective object of $\A$ is also weak duo by hypothesis.

(2) (i)$\Rightarrow$(ii) If (i) holds, then $\A$ is a (semi)cohereditary category. Hence every (finitely cogenerated) injective object of $\A$ is dual self-Rickart \cite[Theorem~4.4]{CK}, and thus dual self-CS-Rickart. Also, every (finitely cogenerated) injective object of $\A$ is weak duo by hypothesis. Now the conclusion follows by Corollary \ref{st00}.

(ii)$\Rightarrow$(i) If (ii) holds, then every (finitely cogenerated) injective object of $\A$ has SSP by \cite[Corollary~2.2]{OHS}, whence we deduce that $\A$ is a (semi)cohereditary category by \cite[Proposition~1.7]{Garcia}, because the proofs of the cited results work in our setting. Then every (finitely cogenerated) factor object of an injective object of $\A$ is injective. But every (finitely cogenerated) injective object of $\A$ is also weak duo by hypothesis.
\end{proof}

\begin{rem} \rm The last two theorems are applicable, for instance, to module categories and comodule categories, which are known to be locally finitely generated Grothendieck.
\end{rem}

\end{document}